\newtheorem{theorem}{Theorem}[]
\newtheorem{remark}{Remark}
\date{}
\title{The Gamma Function via Interpolation}
\author{Matthew F. Causley \\ {\href{mailto:mcausley@kettering.edu}{mcausley@kettering.edu}}}
\affil{Department of Mathematics, Kettering University}
\begin{document}
	\maketitle
	
\begin{abstract}
	A new computational framework for evaluation of the gamma function $\Gamma(z)$ over the complex plane is developed. The algorithm is based on interpolation by rational functions, and generalizes the classical methods of Lanczos \cite{Lanczos} and Spouge \cite{Spouge} (which we show are also interpolatory). This framework utilizes the exact poles of the gamma function. By relaxing this condition and allowing the poles to vary, a near-optimal rational approximation is possible, which is demonstrated using the adaptive Antoulous Anderson (AAA) algorithm, developed in \cite{AAA,AAA_2020}. The resulting approximations are competitive with Stirling's formula in terms of overall efficiency.

	\smallskip
	\noindent \textbf{Keywords.} 	Gamma function, Lanczos approximation, Spouge approximation, Stirling series, Interpolation, AAA approximation.
\end{abstract}
	
\section{Introduction}
In the fall of 1729, Christian Goldbach exchanged a series of letters across Europe with Leonhard Euler and Daniel Bernoulli, discussing the sequence $1, 2, 6, 24, \ldots$, which we recognize as the factorials $a_n = n!$. They sought to interpolate the index, so that $x!$ would be consistently defined over the positive reals \cite{Dutka}. Bernoulli discovered an infinite product representation \cite{Gronau}, which Euler soon refined into an interpolating formula. He then pursued a definition based on integrals, eventually producing the gamma function in its modern form
\begin{equation}
	\label{eqn:Gamma_Def}
	\Gamma(z) = \int_0^\infty t^{z-1}e^{-t}\, dt, \qquad \Re(z)>0.
\end{equation}
The notation is due to Legendre\footnote{Despite the mismatch in arguments, Legendre's notation $\Gamma(z)$ has prevailed over Gauss' notation $\Pi(z) =\Gamma(z+1)$, which satisfies $\Pi(n) = n!$. Although in certain cases \cite{Lanczos,Spouge} the notation $z! = \Gamma(z+1)$ is instead adopted, to avoid confusion.}, who described \eqref{eqn:Gamma_Def} as the Eulerian integral of the second kind \cite{Davis}\footnote{The integral of the first kind defines the beta function $B(\alpha,\beta) = \Gamma(\alpha)\Gamma(\beta)/\Gamma(\alpha+\beta)$.}. Often this integral is taken to be the standard definition (see e.g. \cite{Artin,Remmert} for a detailed discussion about properties and definitions of the gamma function).

Since its advent, the gamma function has come to play a vital role in nearly every branch of pure and applied mathematics, statistics, physics, chemistry and engineering. It is perhaps the most special of the special functions. The literature is vast, but fortunately reviews \cite{Davis,Borwein}, monographs \cite{Artin,Sebah,Dutka}, book chapters \cite{AAR,Whittaker,Abramowitz,BatemanV1}, and bibliographies \cite{Perez} greatly aid in its perusal.

Incidentally it was in the same decade as Euler and his contemporaries had set about their work, that James Stirling and Abraham DeMoivre conducted an investigation into central binomial coefficients and the natural log of factorials, which gave rise to the celebrated Stirling series\footnote{in fact, this series is due to De Moivre. The series obtained by Stirling is
	\[
		\ln z! \sim \ln(\sqrt{2\pi})+ Z\ln Z-Z+\sum_{k\geq 1} \frac{B_{2k}\left(\frac{1}{2}\right)}{2k(2k-1)}
	\frac{1}{Z^{2k-1}}, \quad Z = z+\frac{1}{2},
	\]
	which is slightly more accurate. See \cite{Dutka,CS}.}
\begin{equation}
	\label{eqn:DeMoivre}
	\ln \Gamma(z) \sim \ln \sqrt{2\pi} - z + \left(z-\frac{1}{2}\right)\ln z + \sum_{k\geq 1} \frac{B_{2k}}{2k(2k-1)}\frac{1}{z^{2k-1}}.
\end{equation}
This sum is given in terms of the Bernoulli numbers\footnote{Named after Jacob Bernoulli; Daniel was his nephew.} $B_{2k}=(-1)^{k+1} 2\zeta(2k)(2k)!/(2\pi)^{2k}$, which grow quickly with increasing $k$ and render the series divergent for finite $z$.

From the standpoint of numerical computation, many techniques have been considered \cite{Lanczos, Spouge, Luke, Luke2, Dubuc, Trefethen2006, Trefethen2007,AAA, Rump, Cardoso2019, Reinartz, Smith, Smith2} (we aim to provide merely a representative, but by no means comprehensive view of the literature). The approaches can be categorically sorted into:  i) asymptotic expansions ii) rational functions, and iii) numerical quadrature. Typically asymptotic expansions involve some variation of Stirling's series \eqref{eqn:DeMoivre}, which have been investigated quite thoroughly in the analytic number theory community (see e.g. \cite{Boyd,Mortici, LiChen, Nemes2015, Wang}, and references therein). Quadrature methods are usually applied directly to either Euler's definition \eqref{eqn:Gamma_Def} \cite{Reinartz, Dubuc}, or to the Hankel form for the reciprocal gamma function \cite{Trefethen2006, Trefethen2007}.

When high precision is required, the asymptotic series \eqref{eqn:DeMoivre} is still the gold standard (see, e.g. the excellent paper by Hare \cite{Hare}). Briefly, upon truncation at say $k = K$, and with a translation $z\to z+N$, the recursion
\begin{equation}
	\label{eqn:rec}
	\Gamma(z+1) = z\Gamma(z) \qquad \implies \qquad \ln \Gamma(z+1) = \ln z + \ln \Gamma(z),
\end{equation}
produces the following representation, suitable for computation
\begin{align}
	\nonumber
	\Gamma(z)	&\approx \frac{\sqrt{2\pi}(z+N)^{N+z-1/2}  }{z(z+1)(\ldots)(z+N-1)}\exp\left[-(z+N)+\sum_{k= 1}^K \frac{B_{2k}}{2k(2k-1)}\frac{1}{(z+N)^{2k-1}}\right] \\
	\label{eqn:SNK}
				&= \left[\sqrt{2\pi}(z+N)^{z-\frac{1}{2}}e^{-(z+N)}\right]
				 \left[\frac{(z+N)^N}{z(\ldots)(z+N-1)}\right] \exp\left[\sum_{k= 1}^K \frac{B_{2k}}{2k(2k-1)}\frac{1}{(z+N)^{2k-1}}\right].
\end{align}
The asymptotic expression \eqref{eqn:SNK} has been rearranged, sorted by behavior. The first bracketed term contains the "fast", or dominant asymptotic factors for large $z$. The second term does not contribute to the large $z$ asymptotics, but preserves the structure of $\Gamma(z)$; for $\Re(z)>-N$, this term incorporates the first $N$ poles (and a spurious zero of multiplicity $N$ at $z=-N$). The third bracketed contains the "slow" asymptotic factors, which correct the behavior as $z\to \infty$, and increase the rate of convergence there.

The convergence is optimal for large $|z|$ with $|\arg z|< \pi - \Delta$ for $\Delta >0$. The translation and truncation can also be chosen carefully to ensure fixed precision for small and moderate values \cite{Nemes2010,Boyd}, although it can be argued that this is less efficient than an approach which readily produces a fixed relative precision. This is a known property of the Lanczos approximation \cite{Lanczos}, as well as the related Spouge approximation \cite{Spouge}.

These latter methods can be motivated as follows. If we move the first bracketed term to the left, and generalize $N \to r \in \mathbb{R}$, we have a scaled version of the gamma function
\begin{equation}
	\label{eqn:Fr}
	F_r(z) = \frac{\Gamma(z)e^{z+r}}{(z+r)^{z-\frac{1}{2}}},
\end{equation}
with arbitrary parameter $r>0$. Independent of $r$, we observe that
\[
	\lim_{z\to \infty} F_r(z) = \sqrt{2\pi},
\]
prompting the rational approximation, which we first write as a sum of poles
\[
	F_r(z) \approx c_\infty(r)+\sum_{n=0}^{N-1} \frac{c_n(r)}{z+n}, \qquad r>N.
\]
so as to retain the first $N$ poles of the gamma function. Since $F_r(z)$ inherits these poles and additionally introduces a branch cut emanating from the singularity at $z=-r$, it is meromorphic, and single-valued in the right half plane $\Re(z+r)>0$. We therefore find
\begin{equation}
\label{eqn:FrN}
	\Gamma(z) \approx (z+r)^{z-\frac{1}{2}}e^{-z-r}\left[c_\infty(r)+\sum_{n=0}^{N-1} \frac{c_n(r)}{z+n}\right].
\end{equation}

The approximations due to Lanczos \cite{Lanczos}, and Spouge \cite{Spouge} both utilize \eqref{eqn:FrN} for computation, albeit with different motivation. The derivation presented by Lanczos is brilliant and elegant, relying on a Chebyshev series expansion of the integral \eqref{eqn:Gamma_Def} (after several changes of variables). The approximation can be put into the form \eqref{eqn:FrN}, and the determination of the residues $c_n(r)$ via Chebyshev coefficients ensures high precision for small and moderate values of $z$ over the right-half of the complex plane. By contrast Spouge directly relies on rigorous arguments of complex analysis, and in so doing sets $c_n(r)$ to be the residues of $F_r(z)$. While more concise, the Spouge approximation delivers slightly less accuracy (see section \ref{sec:Exist} for further discussion).

In the spirit of Goldbach, Euler and Bernoulli, we will obtain a novel formula of approximating $\Gamma(z)$ by appealing directly to interpolation theory. Suppose that $F_r(z)$ as defined in \eqref{eqn:Fr} is sampled at the (possibly complex) points $z_j$. Then we have the nearly-Cauchy system $C\textbf{c} = \textbf{f}$, where
\begin{equation}
	\label{eqn:Hf}
	C = \begin{bmatrix}
	1&\frac{1}{z_1} & \ldots &\frac{1}{z_1+N} \\
	\vdots &&& \vdots \\
	1&\frac{1}{z_M} & \ldots &\frac{1}{z_M+N}
	\end{bmatrix},
	\qquad \textbf{c} = \begin{bmatrix} c_\infty(r) \\c_0(r) \\ \vdots \\ c_{N-1}(r)	\end{bmatrix}.
	\qquad \textbf{f} = \begin{bmatrix} F_r(z_1) \\ F_r(z_2) \\ \vdots \\ F_r(z_N) \end{bmatrix}.
\end{equation}
Note that the system is nearly Cauchy, but for the first column of $C$, due to inclusion of the coefficient $c_\infty(r)$. The motivation of this paper began with an observation about this latter approach. In the special case that the interpolation points are chosen to be the first $N$ positive integers $z_j = j$, then the square matrix becomes a nearly-Hilbert matrix, and the method recovers precisely the Lanczos approximation \cite{Lanczos}. It is not clear whether Lanczos realized that his method preserved the property $\Gamma(n+1)=n!$ for $0\leq n \leq N$; and this fact does not appear elsewhere\footnote{at least not explicitly. But this property is used indirectly in \cite{Pugh} and \cite{Luke2} to compute expansion coefficients.}.

Alternatively we can reformulate \eqref{eqn:FrN} as
\begin{equation}
	\label{eqn:R}
	\Gamma(z)  \approx (z+r)^{z-\frac{1}{2}}e^{-z-r}R(z), \qquad R(z) = \frac{p_N(z)}{q_N(z)}, 
\end{equation}
where $p_N$ and $q_N$ are polynomials of degree $N$. Note that there is only a strict equivalence to the sum of poles expansion \eqref{eqn:FrN}, if we demand $q_N(z) = (z)^{(N)} = z(z+1)(\ldots)(z+N-1)$. Heuristically we argue that by allowing the poles to vary, we should expect greater accuracy elsewhere, particularly in the right half plane. Fitting to data then leads to the minimax problem
\[
	\min_{N} ||F_r(z) - R(z)||,
\]
where the norm can be taken over a discrete sampling set, or continuously over a subset $D$ of the complex domain $\mathbb{C}$. Below we will review the adaptive Anatoulous Anderson (AAA) algorithm proposed by Nakatsukasa, Set\'{e} and Trefethen \cite{AAA,AAA_2020}, although other methods such as RKFIT \cite{Berljafa}, vector fitting \cite{Gustavsen}, and bootstrap methods \cite{Xu} are equally applicable. Following the AAA algorithm, the rational function is obtained in barycentric form,
\begin{equation}
	\label{eqn:BR}
	R(z) = \frac{\mu(z)}{\nu(z)}, \qquad \mu(z) =\sum_{n=1}^N \frac{w_j f_j}{z-s_j}, \qquad \nu(z) = \sum_{n=1}^N \frac{w_j}{z-s_j},
\end{equation}
where $s_j$ are interpolation points in the complex plane, and $f_j = F_r(s_j)$ in the current setting. It should be noted that $s_j$ indicates neither the poles nor the zeros of $R(z)$, which is a fascinating aspect of the barycentric form.

The weights are obtained adaptively, by using a greedy algorithm to select from a discrete set of sample points which are simultaneously used to measure the norm of the error. In particular, $\textbf{w}$ is selected as the right singular vector of a (Loewner) divided difference matrix, corresponding to its smallest singular value. This simultaneously ensures accuracy, and numerical stability.

The AAA algorithm, as well as other constructions of rational approximations, fit the poles of a given function $f(z)$ numerically. But one uniquely attractive feature of this approach, is that the algorithm finds the poles directly from the sample set $s_j$, i.e. with no initial guess for the poles.

Hence our goal is twofold. First, we will establish a framework for the fixed-pole algorithm, which rely on solving a nearly-Cauchy system to select the strength of each pole $c_n(r)$ in \eqref{eqn:FrN}. This class contains as special cases the results of Lanczos (with $z_j = j$) and Spouge (which chooses the exact values of the residues, and hence interpolates $\Gamma(z)$ at $z = 0, -1, \ldots, -N+1$).

Secondly, we will construct a rational approximation to $F_r(z)$ as in \eqref{eqn:Fr}, in which the poles are allowed to vary, and thus are determined to produce high precision near the points $s_j$. As we will show below this slight modification leads to a highly accurate approximation that is also computationally expedient.

In general, the interpolation points can be chosen with various strategies, and may be complex. We will show below that the best results follow from laying the points either over the positive real line, or the line of symmetry $z = \frac{1}{2}+iy$. This is due to the fact that Euler's reflection formula
\begin{equation}
	\label{eqn:Ref}
	\Gamma(z) = \frac{\pi}{\sin(\pi z)\Gamma(1-z)},
\end{equation}
which analytically continues $\Gamma(z)$ into the left half of the complex plane. Therefore, the computational domain is restricted to $\Re(z)\geq \frac{1}{2}$. By the maximum modulus principle \cite{CKP}, the maximum error occurs along this line, and hence optimizing the accuracy there will ensure high precision over the entire complex plane (precision along the negative real axis is yet another matter, which we do not address here; see \cite{Rump} for further discussion).

The rest of the paper is laid out as follows. In the next section, we will review the relevant details for the existing approximations due to Lanczos and Spouge. We then proceed to generalize these formulas, and produce an interpolatory algorithm for $\Gamma(z)$ with the poles fixed. Several strategies for choosing points of interpolation are then considered. We then present results based on the general rational approximation using the AAA algorithm, and a brief comparison is made to the Stirling series. If the measure of efficiency is evaluation time to maintain a fixed relative precision $\epsilon$, the AAA-based algorithm is slightly more efficient that the Stirling series, which will inherently need to destroy extra digits of accuracy for larger $|z|$ to accommodate the precision for small $|z|$. We will then make some concluding remarks.

\section{Existing Approaches}
\label{sec:Exist}
The Lanczos approximation has been analyzed extensively; see for example \cite{Luke,Luke2,Pugh,Godfrey}. Other methods, such as those of Spouge \cite{Spouge} and those based on quadrature \cite{Trefethen2006,Trefethen2007,Dubuc} have also been explored. More recently, Chebyshev expansions have been applied directly to the Stirling series \cite{Reinartz}, which results in an expansion fo the form \eqref{eqn:F_N}; we will not pursue this any further.

The approximations of Lanczos and Spouge both utilize the asymptotically balanced gamma function \eqref{eqn:Fr}, with real parameter $r>0$. This function $F_r(z)$ is meromorphic for $\Re(z+r)>0$, with poles at the non-positive integers inherited from $\Gamma(z)$. A branch point at $z=-r$ is introduced by this approximation, and so the region of analyticity is over the complex plane away from the negative real axis.

\subsection{The Approximation of Spouge}
The approach taken by Spouge is concise, but only moderately accurate. The poles of $\Gamma(z)$ are simple, and their residues are readily computed by making use of the reflection formula \eqref{eqn:Ref}
\begin{equation}
	\label{eqn:residue}
	\lim_{z \to -k} (z+k)\Gamma(z) = \frac{1}{\Gamma(k+1)}\lim_{z\to -k} \frac{\pi(z+k)}{\sin(\pi z)} = \frac{(-1)^k}{k!}, \qquad k = 0, 1, \ldots.
\end{equation}
It follows that
\[
	c_n(r) = \frac{(-1)^n}{n!} e^{r-n}(r-n)^{n+1/2}, \qquad n = 0, 1, \ldots, N-1.
\]
Spouge sets $c_\infty(r)=\sqrt{2\pi}$, which ensures convergence as $z\to \infty$. We then have
\begin{equation}
\label{eqn:Spouge}
\Gamma(z) \approx \Gamma_S(z)=(z+r)^{z-\frac{1}{2}}e^{-(z+r)}\left[\sqrt{2\pi} +\sum_{n=0}^{N-1} \frac{(-1)^{n}e^{r-n}(r-n)^{n+\frac{1}{2}}}{n!(z+n)}\right].
\end{equation}
An extensive analysis of the parameter $r$ was conducted by Pugh \cite{Pugh} for the Lanczos approximation. The analysis is quite general, and can also be applied to the Spouge approximation. We can choose $r>N-1$ to approximate an additional point $\bar{z}$, and so $r = r(\bar{z})$ which satisfies the nonlinear equation $\Gamma(\bar{z})=\Gamma_S(\bar{z})$. We display $r(\bar{z})$ for several values of $N$ and $\bar{z}$ in Table \ref{tab:Spouge}.
\begin{figure}[t!]
	\centering
	\includegraphics[width=0.48\linewidth]{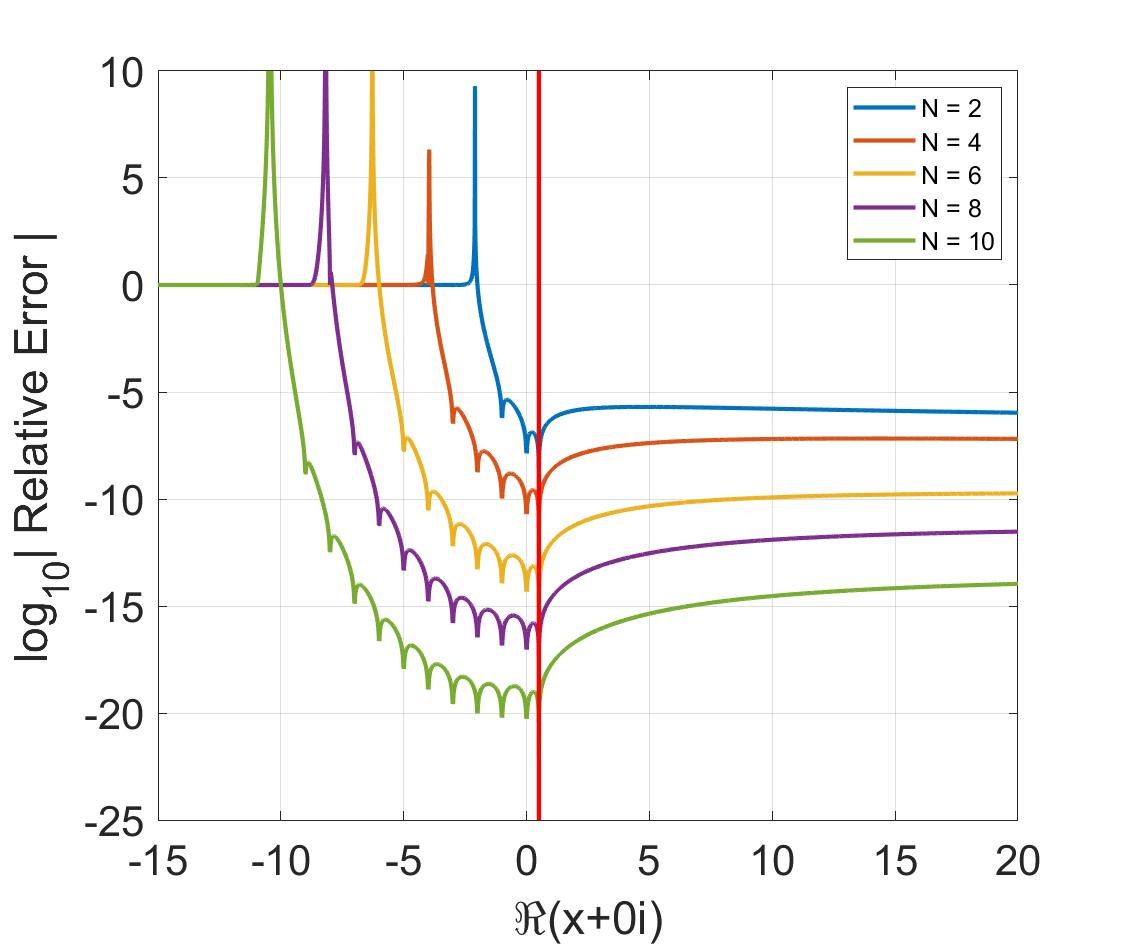}
	\includegraphics[width=0.48\linewidth]{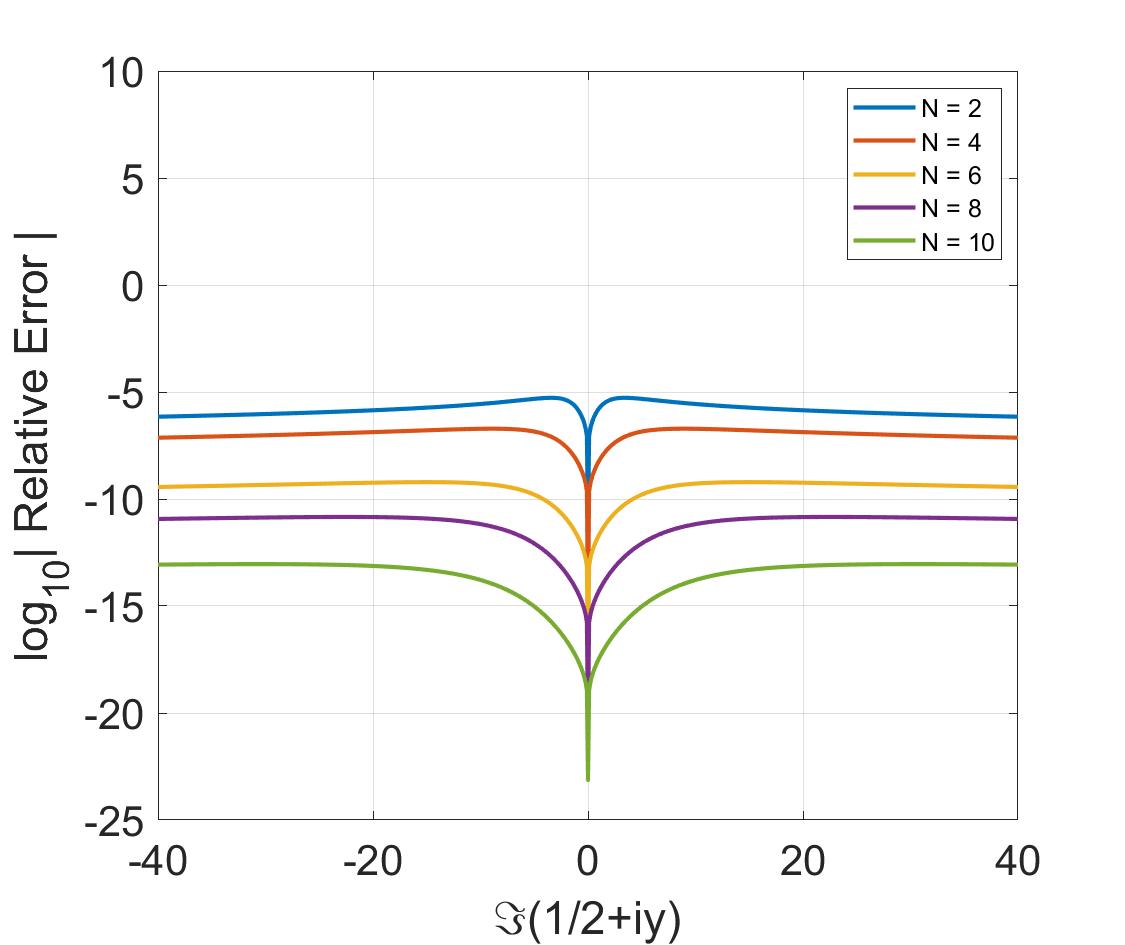}
	\includegraphics[width=0.48\linewidth]{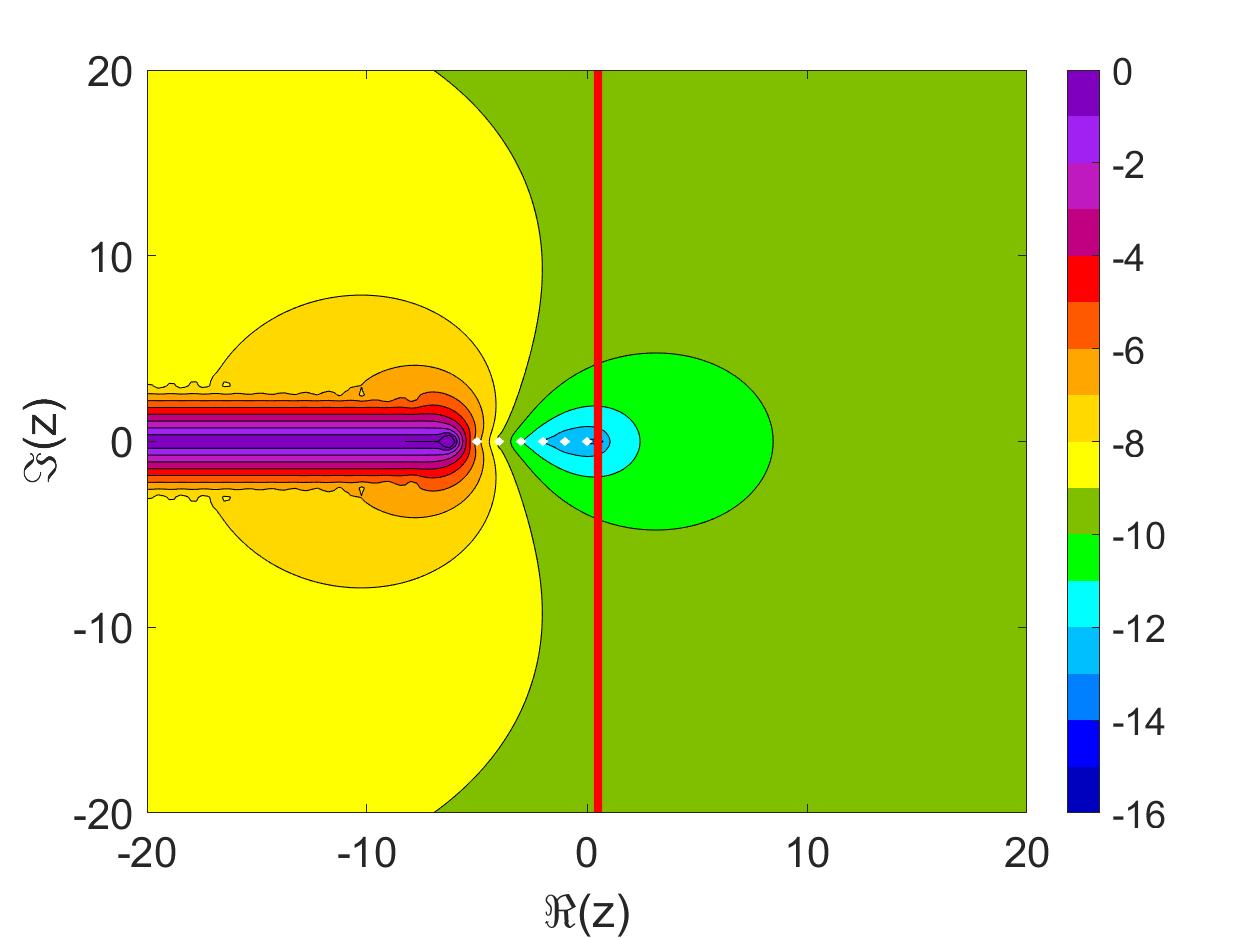}
	\includegraphics[width=0.48\linewidth]{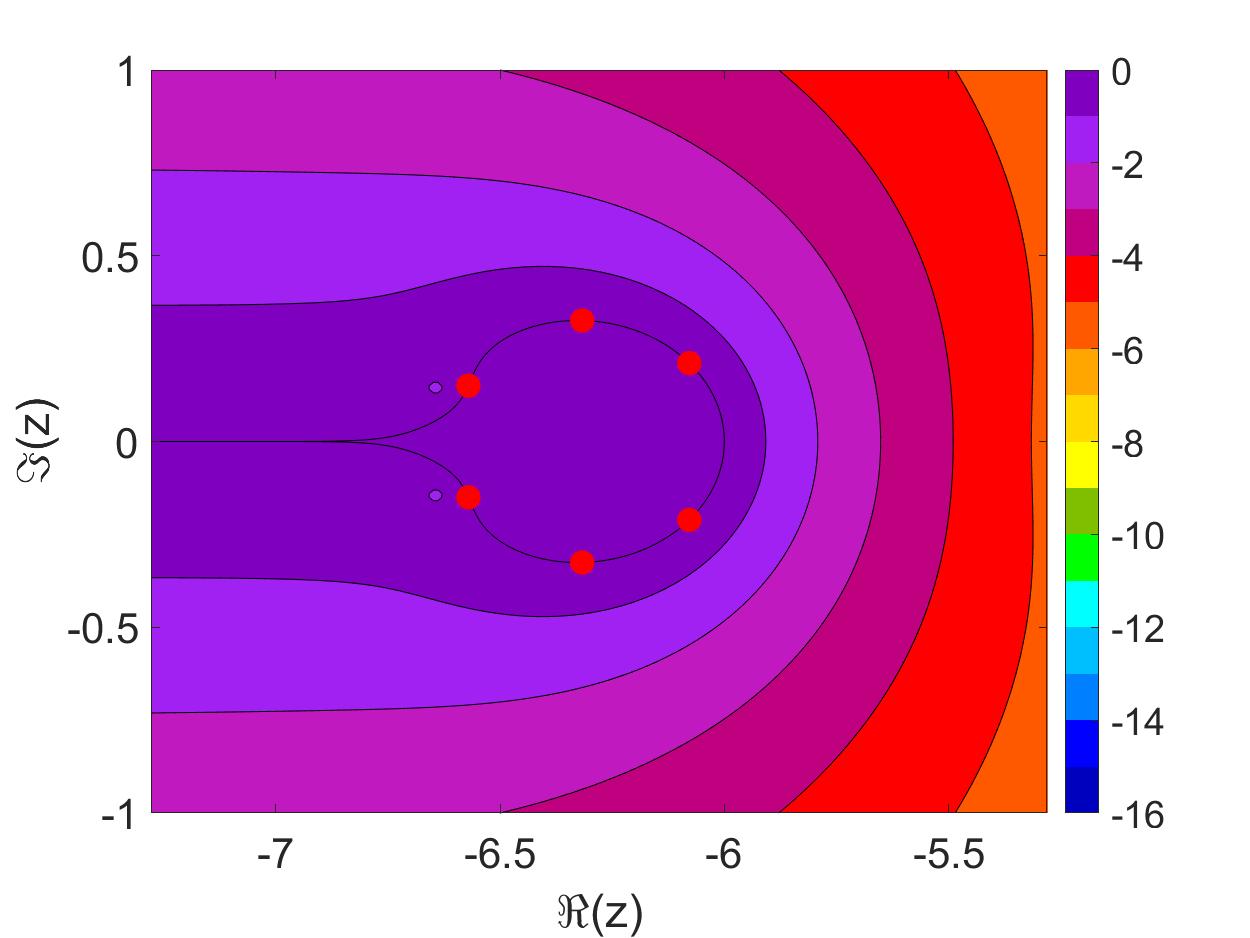}
	\caption{Relative error of $\Gamma(z)$ along the real line $z = x$ (top left), and the imaginary line $z=\frac{1}{2}+iy$ (top right) for Spouge's approximation \eqref{eqn:Spouge}. The approximation in the complex plane (bottom left) with $N=6$ poles exhibits a spurious branch cut, and spurious zeros near the branch point (zoom-in, bottom right).}
	\label{fig:Spouge}
\end{figure}
\begin{table}[ht!]
	\begin{center}
		\caption{The values of $r = r(\bar{z})$ which make Spouge's method exact at $z = \bar{z}$.}
		\label{tab:Spouge}
		\begin{tabular}{|c||c|c|c|c|c|}
			\hline
			$N$   & $r(0.5)$   & $r(1.0)$   & $r(2.0)$   & $r(50.0)$   & $r(100.0)$   \\ \hline 
			1 &   1.00185747 &   0.99223170 &   0.99194114 &   0.99138822 &   0.99119485 \\ \hline 
			2 &   2.09996567 &   2.10115467 &   2.10116440 &   2.10117352 &   2.10117352 \\ \hline 
			3 &   2.71663951 &   2.69959327 &   2.69870240 &   2.69689395 &   2.69622362 \\ \hline 
			4 &   3.96458259 &   3.95086119 &   3.94993006 &   3.94795127 &   3.94718590 \\ \hline 
			5 &   5.19066504 &   5.18533756 &   5.18484911 &   5.18373907 &   5.18328154 \\ \hline 
			6 &   6.27826689 &   6.28058958 &   6.28075657 &   6.28110947 &   6.28124423 \\ \hline 
			7 &   6.91355131 &   6.90188976 &   6.90073381 &   6.89803597 &   6.89689347 \\ \hline 
			8 &   8.17657066 &   8.16567447 &   8.16446857 &   8.16155303 &   8.16027376 \\ \hline 
			9 &   9.37725744 &   9.37476768 &   9.37442505 &   9.37353160 &   9.37310870 \\ \hline 
			10 &  10.44593152 &  10.44900164 &  10.44934278 &  10.45016390 &  10.45052136 \\ \hline 
		\end{tabular}
	\end{center}
\end{table}

In Figure \ref{fig:Spouge}, we choose $\bar{z} =\frac{1}{2}$, and construct the relative error $\log_{10}\left|1-\Gamma_S(z)/\Gamma(z)\right|$ of the Spouge approximation along the real line $z =x$ (top left), and the imaginary line $z = \frac{1}{2}+iy$ (top right). The errors are obtained using variable precision arithmetic. A contour plot over the complex plane is also depicted (bottom plots). For comparison with other methods below, we fix the degree $N = 6$, yieldinf a relative accuracy of 9 or more digits in the right half plane $\Re(z)>1/2$.

It is evident from the Figure that the approximation converges slowly as $|z|$ increases, in accordance with the bound (see theorem 1.3.1 in \cite{Spouge})
\[
	\left|1-\frac{\Gamma_S(z)}{\Gamma(z)}\right| \leq \frac{\sqrt{r+1}}{(2\pi)^{r+3/2}}\frac{1}{\Re(z+r)}.
\]
But we also see a striking feature, that the error vanishes at the first $N$ poles. In this sense, the Spouge approximation is interpolating $\Gamma(z)$ at the first $N$ non-positive integers (and at infinity).

Since the approximation of $F_r(z)$ is a rational approximation of degree $N$, it will produce $N$ zeros, which are spurious, as $\Gamma(z)\neq 0$ for $z\in \mathbb{C}$. As noted in \cite{AAA,Stahl,Aptekarev}, the zeros of a rational function written as a sum of poles satisfy
\[
	R(z) = R_\infty +\sum_{n=0}^{N-1} \frac{c_n}{z-p_n} = 0,
\]
which, along with the identity $\frac{z}{z-p_n} =1+\frac{p_n}{z-p_n}$ can be rewritten as
\[
	z = -\frac{1}{R_\infty}\sum_{n=0}^{N-1} \left(c_n+\frac{c_n p_n}{z-p_n}\right), \quad R_\infty \neq 0.
\]
Combining, we find that the zeros are the (nontrivial) eigenvalues of the $(N+1)\times(N+1)$ arrowhead matrix
\begin{equation}
	\label{eqn:Az}
	A = \begin{bmatrix}
			b_\infty& b_0	&\ldots	&b_{N-1}\\
			1 		& p_0	&\ldots	&0		\\
			\vdots	& 		&\ddots	&\vdots	\\
			1 		&  0	&\ldots	&p_{N-1}
	\end{bmatrix},
	\qquad b_\infty = -\frac{1}{R_\infty} \sum_{n=0}^{N-1} c_n, \qquad b_n = -\frac{c_n p_n}{R_\infty}.
\end{equation}
The zeros of the Spouge approximation are then computed as eigenvalues, and depicted in the bottom right panel of Figure \ref{fig:Spouge}. These zeros cluster around the branch point $z=-r$, a well-documented phenomenon of rational approximations for functions with branch point singularities \cite{Stahl,Aptekarev}.

\subsection{The Lanczos Approximation}
\begin{figure}[t!]
	\centering
	\includegraphics[width=0.48\linewidth]{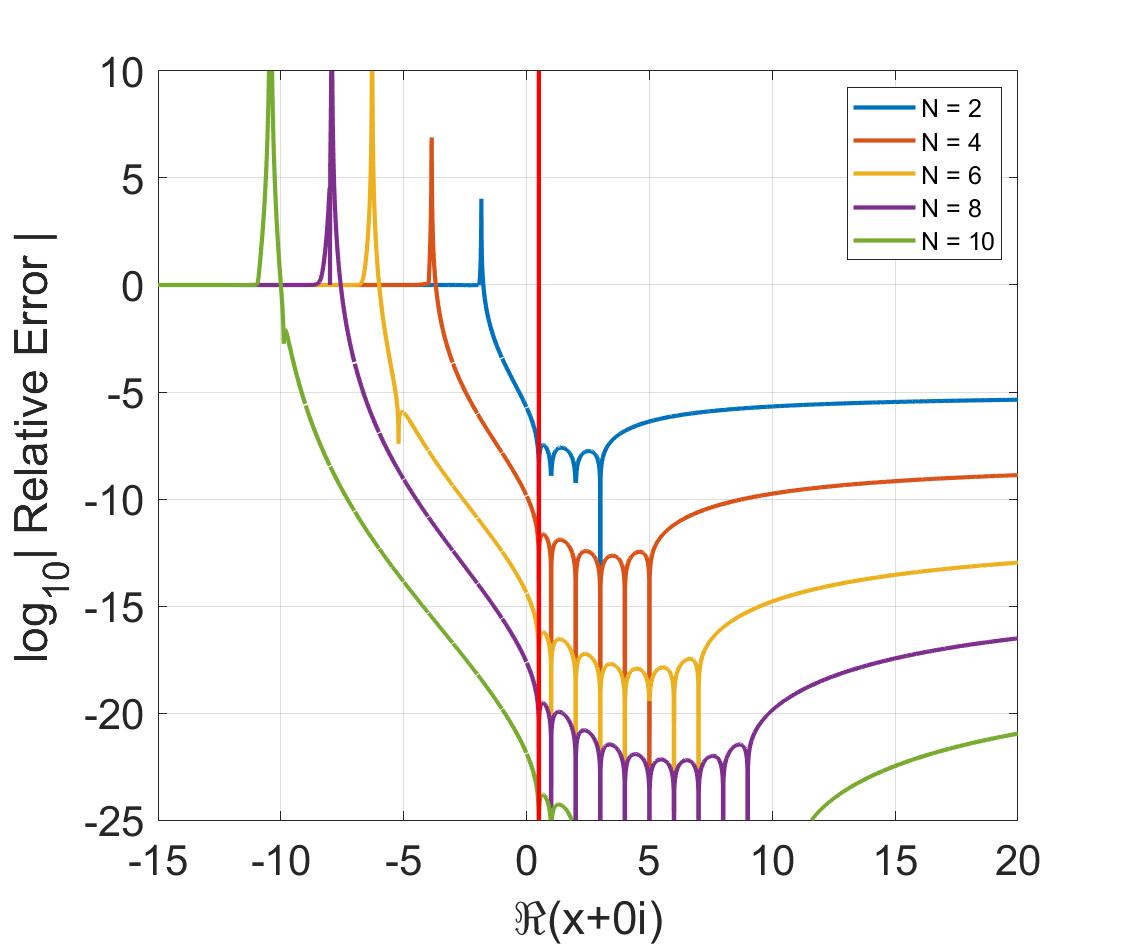}
	\includegraphics[width=0.48\linewidth]{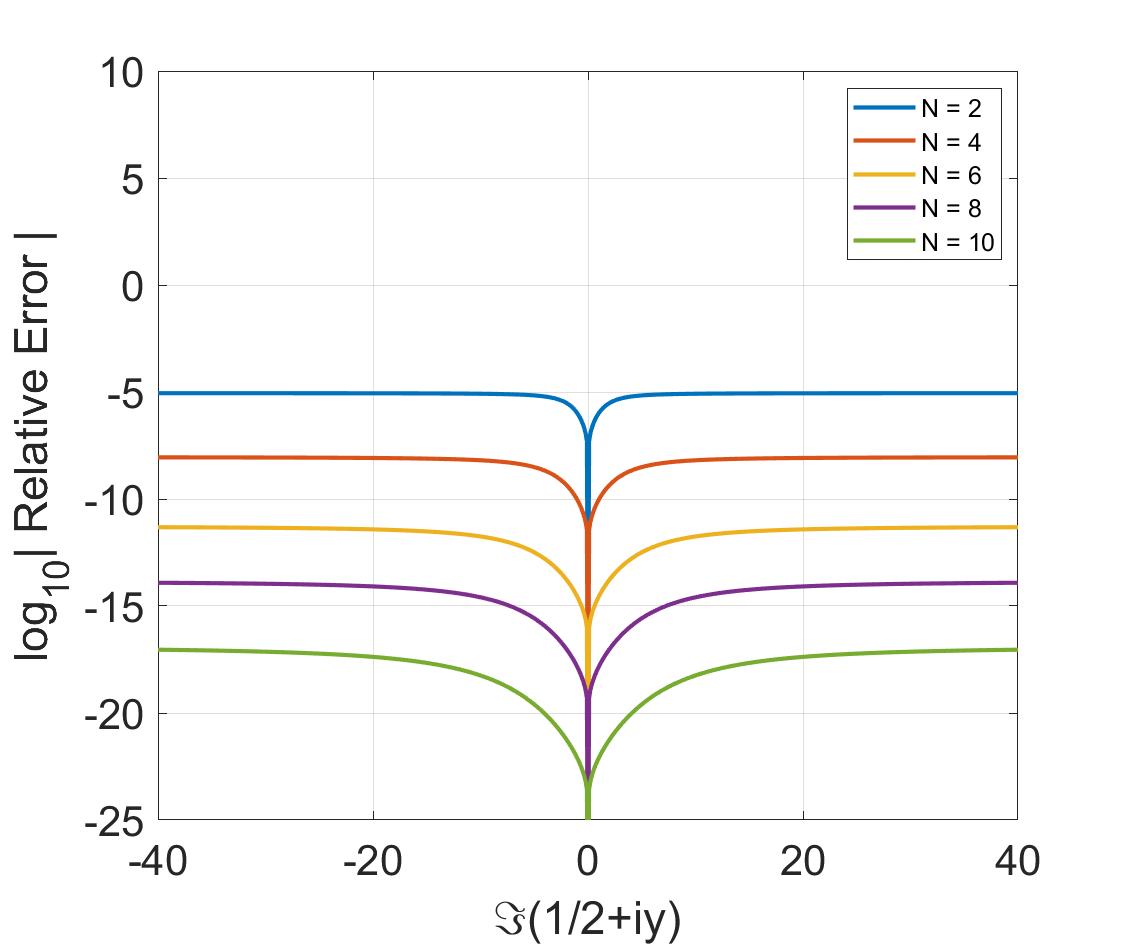}
	\includegraphics[width=0.48\linewidth]{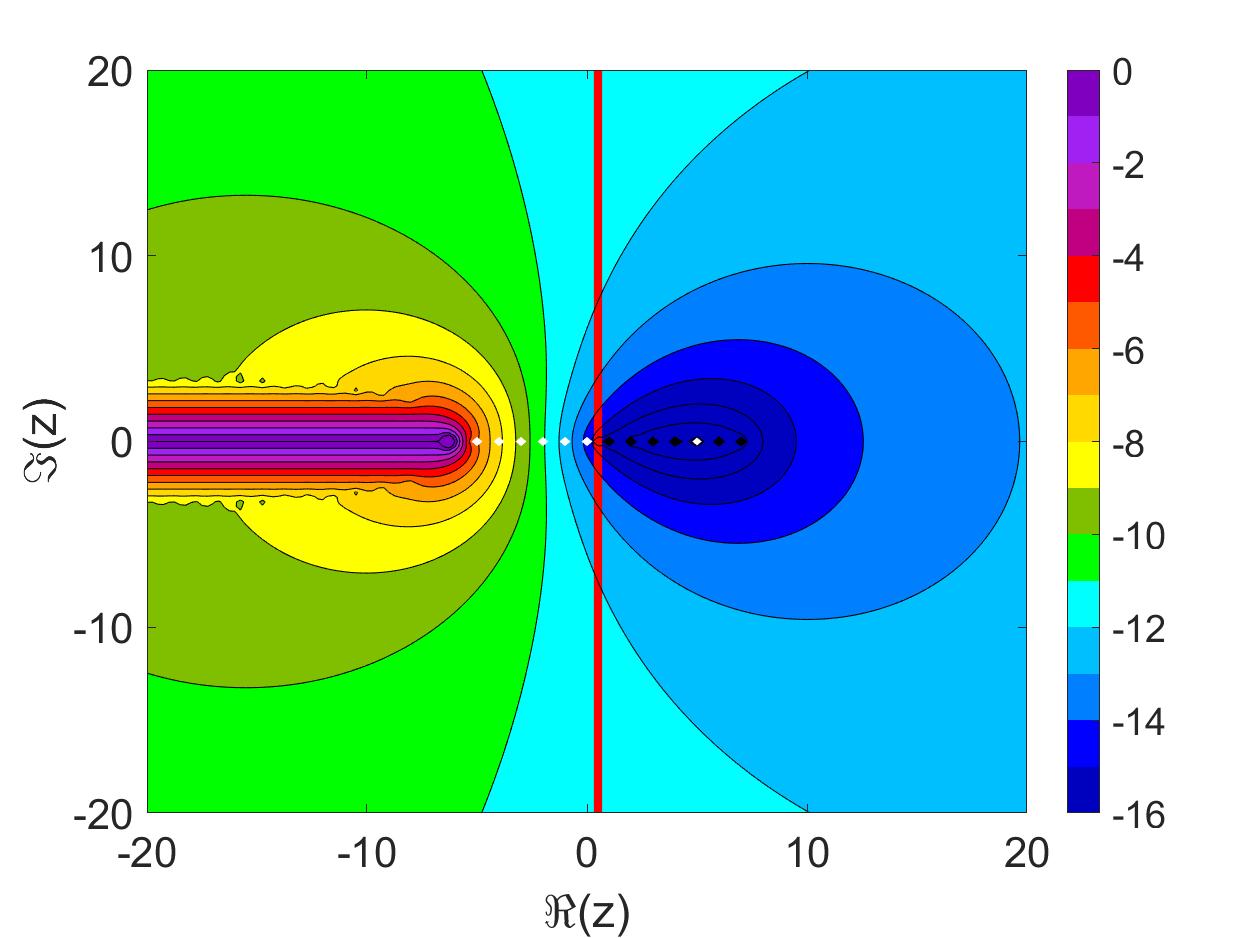}
	\includegraphics[width=0.48\linewidth]{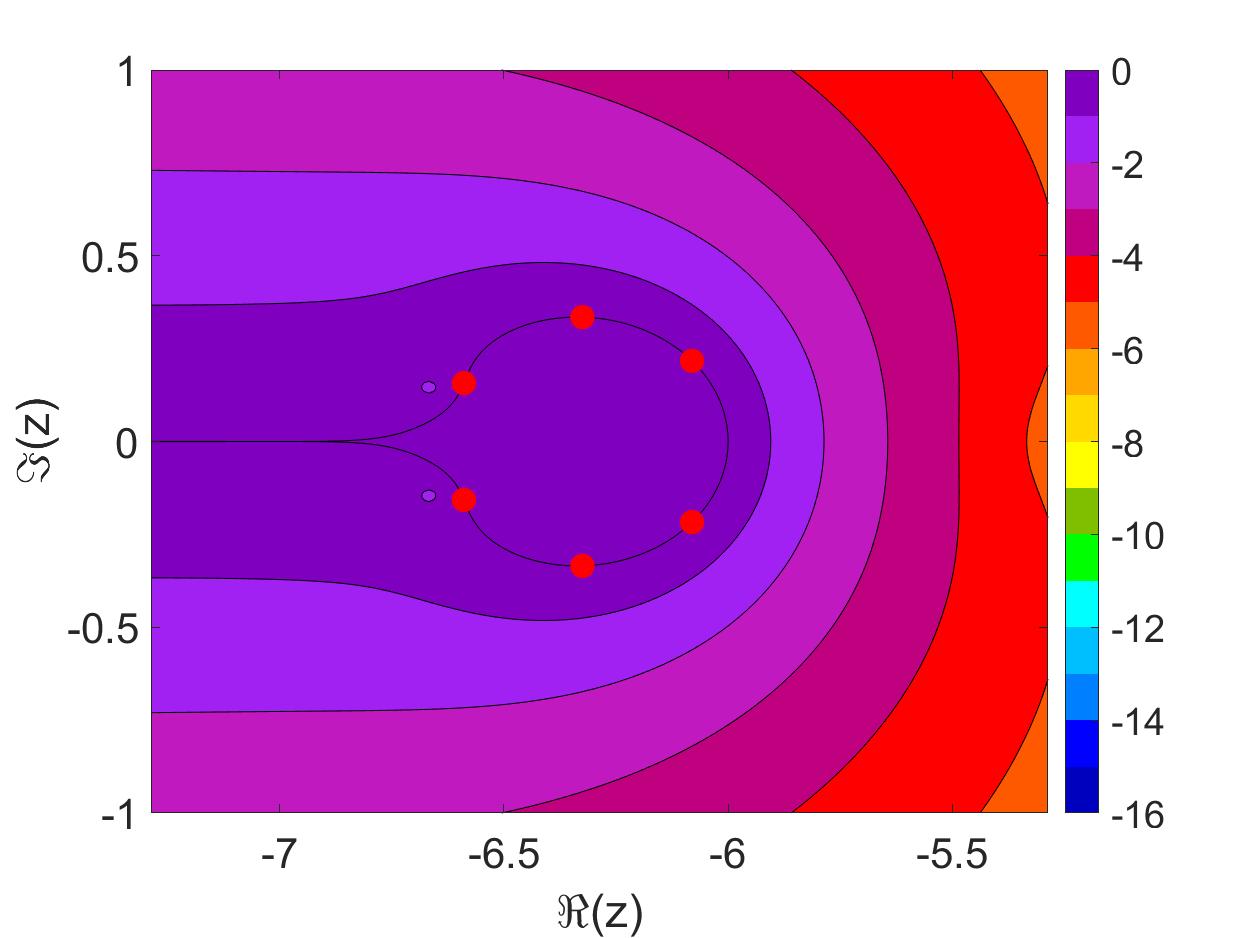}
	\caption{Relative error of $\Gamma(z)$ along the real line $z = x$ (top left), and the imaginary line $z=\frac{1}{2}+iy$ (top right) with the Lanczos approximation \eqref{eqn:Lanczos_H}. The approximation in the complex plane (bottom left) with $N=6$ poles exhibits a spurious branch cut, and spurious zeros near the branch point (zoom-in, bottom right).}
	\label{fig:Lanczos}
\end{figure}
The method in \cite{Lanczos} was studied extensively by Pugh \cite{Pugh}, and Luke \cite{Luke}, and has been made popular by its appearance in Numerical Recipes \cite{NumRec}. The Lanczos approximation first takes the form
\begin{equation}
	\label{eqn:Lanczos_H}
	\Gamma_L(z) = (z+r)^{z-\frac{1}{2}}e^{-(z+r)}\left[b_0(r) + \sum_{n=1}^{N} b_n(r)H_n(z)\right],
\end{equation}
using the rational functions (c.f. the recursion, eqn. \eqref{eqn:rec})
\begin{equation}
	\label{eqn:H}
	H_n(z) = \frac{\Gamma^2(z)}{\Gamma(z+n)\Gamma(z-n)} = \frac{(z-1)(\ldots)(z-n)}{z(\ldots)(z+n-1)}.
\end{equation}
The expansion coefficients were obtained by a series of very creative arguments by Lanczos involving a Chebyshev series expansion of \eqref{eqn:Gamma_Def}, yielding (in our notation)
\begin{equation}
	\label{eqn:b_Lanczos}
	b_0(r) = \frac{e^{r+1}}{\sqrt{r+1}} = F_r(1), \qquad b_n(r) = 2n\sum_{k=0}^n (-1)^{n-k}\frac{(n+k-1)!}{(k!)^2(n-k)!} F_r(k+1).
\end{equation}
The functions $H_n(z)$ are then re-expanded by Lanczos into a sum of poles, with a form identical to \eqref{eqn:Spouge} but where the coefficients $c_n(r)$ differ from the true residues.

\begin{table}[h!]
	\begin{center}
		\caption{The values of $r = r(\bar{z})$ which make Lanczos' method exact for various $\bar{z}$.}
		\label{tab:Lanczos}
		\begin{tabular}{|c||c|c|c|c|c|c|}
			\hline
			$N$   & $r(0.5)$   & $r(15.0)$   & $r(20.0)$   & $r(50.0)$   & $r(100.0)$   & $r(\infty)$  \\ \hline 
			1 &   1.00077330 &   0.99051561 &   0.99020468 &   0.98961285 &   0.98940582&  0.989194 \\ \hline 
			2 &   2.10377552 &   2.10350676 &   2.10344648 &   2.10331555 &   2.10326432&  2.103209 \\ \hline 
			3 &   3.13999099 &   3.15268144 &   3.15324285 &   3.15435549 &   3.15475877&  3.155180 \\ \hline 
			4 &   3.86444531 &   3.84659270 &   3.84540501 &   3.84289319 &   3.84192626&  3.840882 \\ \hline 
			5 &   5.10339071 &   5.08750951 &   5.08622486 &   5.08339096 &   5.08225558&  5.081000 \\ \hline 
			6 &   6.28671094 &   6.28217746 &   6.28169594 &   6.28055659 &   6.28006828&  6.279506 \\ \hline 
			7 &   7.37615402 &   7.37831319 &   7.37846649 &   7.37878053 &   7.37889490&  7.379012 \\ \hline 
			8 &   7.92985725 &   7.91507505 &   7.91348318 &   7.90967267 &   7.90801798&  7.906094 \\ \hline 
			9 &   9.17946385 &   9.16597042 &   9.16437818 &   9.16044255 &   9.15867667&  9.156578 \\ \hline 
			10 &  10.41889651 &  10.40882965 &  10.40750139 &  10.40407749 &  10.40247363& 10.400511 \\ \hline 
		\end{tabular}
	\end{center}
\end{table}
In Figure \ref{fig:Lanczos} we take $r(\bar{z}) = r(1/2)$, which reduces the error along the line of symmetry. In particular, we are able to obtain better than 11 decimals of relative precision over the right half plane $\Re(z)>1/2$ with $N=6$ poles (bottom left panel). As was the case with Spouge's approximation, the spurious zeros cluster near the branch point $z=-r$ (bottom right panel).

Again, we see that the error vanishes at several points, but this time at the positive integers. It is likely that Lanczos was aware of the interpolating property of his approximation. However, we emphasize that it is not mentioned explicitly in \cite{Lanczos}. We therefore state this as a theorem.
\begin{theorem}
	The Lanczos approximation \eqref{eqn:Lanczos_H}, with \eqref{eqn:H} and \eqref{eqn:b_Lanczos}, is exact at $z= 1, 2, \ldots, N+1$.
\end{theorem}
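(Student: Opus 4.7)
The plan is to reduce the theorem to an interpolation identity for the bracketed factor in \eqref{eqn:Lanczos_H}. Write
\[
G(z) := b_0(r) + \sum_{n=1}^N b_n(r) H_n(z).
\]
Because $(z+r)^{z-1/2}e^{-(z+r)}$ is nonzero at each positive integer $z = k$, the claim $\Gamma_L(k) = \Gamma(k)$ is equivalent to $G(k) = F(k;r)$ for $k = 1, 2, \ldots, N+1$. The starting observation is that the basis $\{1, H_1, \ldots, H_N\}$ is tailored for interpolation at the positive integers: from \eqref{eqn:H}, the numerator $(z-1)(z-2)\cdots(z-n)$ vanishes at $z = 1, 2, \ldots, n$, so $H_n(k) = 0$ whenever $n \ge k$, and only the terms with $n \le k-1$ survive in the evaluation $G(k)$.

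This zero pattern turns the $(N+1) \times (N+1)$ system $\{G(k) = F(k;r)\}_{k=1}^{N+1}$ into a lower triangular system for the unknowns $(b_0, b_1, \ldots, b_N)$, with diagonal entries $H_{k-1}(k) = ((k-1)!)^2 / (2k-2)! \neq 0$. Hence there is a unique interpolating sequence of coefficients, and the substance of the theorem is that formula \eqref{eqn:b_Lanczos} is that solution. I would verify this by substituting \eqref{eqn:b_Lanczos} into $G(k)$, swapping the order of summation, and reading off the coefficient of each $F(j+1;r)$; this coefficient must equal $\delta_{j+1,k}$ for the interpolation to hold, which reduces the task to a finite binomial identity.

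The main obstacle is precisely this combinatorial identity. One can sidestep the direct check by passing to the partial-fraction basis $\{1, (z+n)^{-1}\}_{n=0}^{N-1}$, which spans the same $(N+1)$-dimensional space of rational functions (both bases have simple poles among $0, -1, \ldots, -(N-1)$ and a finite limit at $\infty$). In this basis, the interpolation conditions are exactly the rows of the nearly-Hilbert system $H\mathbf{c} = \mathbf{f}$ already displayed, and invoking the factorization $H^{-1} = UL$ cited by the paper yields $\mathbf{b} = L\mathbf{f}$, matching \eqref{eqn:b_Lanczos}. Either path completes the proof: the direct route requires a binomial identity provable by induction or Wilf--Zeilberger, while the basis-change route requires an independent verification of the $UL$ factorization of the Hilbert-matrix inverse.
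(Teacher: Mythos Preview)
Your plan is sound: the triangular vanishing $H_n(k)=0$ for $n\ge k$ does reduce the interpolation conditions to a lower-triangular system with nonzero diagonal, so there is a unique solution, and the content becomes verifying that \eqref{eqn:b_Lanczos} is that solution. Both of your proposed completions (the binomial identity for the coefficient of $F(j+1;r)$ in $G(k)$, or the $UL$ factorization of the Hilbert inverse) would work, though you have not carried either one out.

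The paper's argument is genuinely different and avoids all of this combinatorics. Rather than treating \eqref{eqn:b_Lanczos} as a formula to be checked against the interpolation conditions, the paper takes as input the exactness of the full infinite Lanczos expansion $F(z;r)=\sum_{n=0}^\infty b_n(r)H_n(z)$, so that the truncation error is the tail $\epsilon(z;r,N)=\sum_{j\ge 1} b_{N+j}(r)H_{N+j}(z)$. The same zero pattern you noticed---$H_m$ vanishes at $z=1,\ldots,m$---now shows that every term $H_{N+j}$ with $j\ge 1$ vanishes at $z=1,\ldots,N+1$, so the tail vanishes identically there and the proof is finished in two lines. In short, you use the vanishing of $H_n$ to set up a finite triangular system and then must solve it; the paper uses the same vanishing on the \emph{discarded} terms and never needs to touch the coefficients. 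The trade-off is that the paper's route imports the convergence of the infinite series as a black box, whereas your route is self-contained once the binomial identity is established.
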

\begin{proof}
	The approximation \eqref{eqn:Lanczos_H} satisfies
	\begin{align*}
	\Gamma(z) =(z+r)^{z-1/2}e^{-(z+r)}\left[\sum_{n=0}^N b_n(r)H_n(z) + \epsilon(z;r,N)\right],
	\end{align*}
	where the error is of the form
	\[
	\epsilon(z;r,N) = \sum_{j=1}^\infty b_{N+j}(r)H_{N+j}(z).
	\]
	But the rational functions \eqref{eqn:H} satisfy (c.f. eqn. \eqref{eqn:rec})
	\begin{align*}
	H_{N+j}(z) &= \frac{\Gamma(z)}{\Gamma(z-N-1)}\frac{\Gamma(z)}{\Gamma(z+N+j)}\frac{\Gamma(z-N-1)}{\Gamma(z-N-j)} \\
	&= (z-1)(\ldots)(z-N)\left[\frac{(z-N-1)(\ldots)(z-N-j)}{(z)(\ldots)(z+N-1+j)}\right], \qquad j\geq 1.
	\end{align*}
	It follows that for each $j\geq 1$, $H_{N+j}(z)=0$ for $z=1, 2, \ldots N+1$, and thus the error vanishes identically.
\end{proof}
\begin{remark}
	Luke \cite{Luke2} and Pugh \cite{Pugh} both use the property that $H_k(m) = 0$ for $m = 1, 2, \ldots, k$ to recursively compute the coefficients $b_n(r)$ in \eqref{eqn:Lanczos_H}. They do not explore the interpolation properties any further.
\end{remark}

\begin{remark}
	The Lanczos commits a relative error that decreases geometrically with $N$,
	\[
	\left|1-\frac{\Gamma_L(z)}{\Gamma(z)}\right|\leq  C\rho^{-N}, \qquad \rho>1.
	\]
	See Luke \cite{Luke2} for details. Pugh \cite{Pugh} further observed that the error is predominantly due to the first term in the infinite sum,
	\[
		\epsilon(z;r,N) \approx b_{N+1}(r)H_{N+1}(z).
	\]
\end{remark}

In Table \eqref{tab:Lanczos} we record $r(\bar{z})$ corresponding to the Lanczos method. The final column contains $r(\infty) = r^*-1/2$, where the values of $r^*$ are taken from Appendix C of \cite{Pugh}, and the adjustment is due to notational differences.


\section{Reformulation}
We will now generalize the Lanczos and Spouge methods as follows. The asymptotically scaled gamma function \eqref{eqn:Fr} can be well-approximated by a sum of poles placed at the non-positive integers. If we sample $F_r(z)$ at $N+1$ points $z=z_j$ in half-plane $\Re(z+r)>0$, then the coefficient vector $\textbf{c}$ satisfies the nearly-Cauchy system $C\textbf{c}=\textbf{f}$, as defined above in \eqref{eqn:Hf}. The procedure is summarized as follows.
\begin{enumerate}
	\item Choose the parameter $r>N-1$. Typically, $r =r(\bar{z})$ can be selected to interpolate an additional point $\bar{z}$, which may or may not be $\infty$.
	\item Choose a set of $N+1$ values $z_j$.  Set up and solve the linear system \eqref{eqn:Hf}.
	\item Form the approximation, valid for $\Re(z+r)>0$
	\begin{equation}
		\label{eqn:F_N}
		\Gamma(z) \approx \Gamma_N(z;r) = (z+r)^{z-1/2}e^{-(z+r)}F_N(z;r), \qquad \text{with} \qquad F_N(z;r) = c_\infty(r) + \sum_{n=0}^{N-1}\frac{c_n(r)}{z+n}.
	\end{equation}
\end{enumerate}
\begin{remark}
	Typically we choose the points to be positive integers. But we can relax this constraint, provided we are willing to precompute several values $\Gamma(z_j)$ in the complex plane, to a suitably high precision. We can also over-sample, and construct a least-squares fit for the coefficients.
\end{remark}

\begin{remark}
	In the numerical demonstrations below, the relative error is constructed using high precision, to illustrate convergence beyond $N=6$. The nearly-Cauchy matrix $C$ is ill-conditioned, and we observe a loss of one or two decimals of accuracy using double precision when $N=6$.
\end{remark}

We now examine the error in the approximation.
\begin{theorem}
	Let $\Gamma_N(z;r)$ be given as in \eqref{eqn:F_N}. Then in the half-disk $\Re(z)>0$, $|z| \leq \max\{|z_j|\}$, the relative error satisfies
	\[
		\left|1-\frac{\Gamma_N(z;r)}{\Gamma(z)}\right| \leq C_N(r)\left|\left(\frac{1}{z}-\frac{1}{\bar{z}}\right)\frac{\psi(z)}{\phi(z)}\right|,
	\]
	where
	\[
		\phi(z) = z(z+1)(\ldots)(z+N-1), \qquad \psi(z) = \prod_{j=1}^{N+1}(z-z_j).
	\]
	The constant $C_N(r)$ does not depend on $z$. The decay at $\infty$ is characterized by the asymptotic behavior
	\[
		1-\frac{\Gamma_N(z;r)}{\Gamma(z)} \sim  1-\frac{c_\infty(r)}{\sqrt{2\pi}}+\frac{D_N(r)}{z}, \qquad |z|\gg 1.
	\]
\end{theorem}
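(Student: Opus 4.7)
The plan is to reformulate the interpolation as polynomial interpolation of a scaled function, apply the Newton divided-difference error formula, and then extract the geometric factors in the stated bound via algebraic identities. Writing $F_N(z;r) = P(z)/\phi(z)$ with $P$ a polynomial of degree $N$, the interpolation conditions $F_N(z_k;r) = F(z_k;r)$ for $k = 1,\ldots,N+1$ translate to $P(z_k) = g(z_k)$ with $g(z):=\phi(z)F(z;r)$, so $P$ is the Lagrange interpolant of $g$ at $z_1,\ldots,z_{N+1}$. The Newton remainder reads
\[
g(z) - P(z) = g[z_1,\ldots,z_{N+1},z]\,\psi(z).
\]
The parameter $r = r(\bar{z})$ is defined by the additional interpolation condition $P(\bar{z}) = g(\bar{z})$, which forces $g[z_1,\ldots,z_{N+1},\bar{z}] = 0$. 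The divided-difference recurrence then gives $g[z_1,\ldots,z_{N+1},z] = (z-\bar{z})\,g[z_1,\ldots,z_{N+1},\bar{z},z]$, and dividing by $\phi(z)F(z;r)$ yields
\[
1 - \frac{F_N(z;r)}{F(z;r)} = \frac{(z-\bar{z})\,\psi(z)}{\phi(z)\,F(z;r)}\,g[z_1,\ldots,z_{N+1},\bar{z},z].
\]
The identity $z - \bar{z} = -z\bar{z}(1/z - 1/\bar{z})$ recasts the right-hand side as $-\bar{z}(1/z - 1/\bar{z})(\psi(z)/\phi(z))$ times $z\,g[z_1,\ldots,z_{N+1},\bar{z},z]/F(z;r)$, so the proof reduces to showing that this last factor is bounded uniformly on $\{\Re(z)>0\}$.

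For that uniform bound I would introduce
\[
G(z) := \frac{\bigl(F(z;r)-F_N(z;r)\bigr)\phi(z)}{\psi(z)(z-\bar{z})},
\]
which absorbs every zero and pole of $F-F_N$: the interpolation zeros at $z_1,\ldots,z_{N+1},\bar{z}$ cancel $\psi(z)(z-\bar{z})$; the simple poles of $F_N$ at $0,-1,\ldots,-(N-1)$ cancel the zeros of $\phi(z)$; and the only possible residual singularity is the simple pole of $F$ at $z=-N$, which lies strictly in the left half-plane since $r>N-1$. Thus $G$ is holomorphic in $\{\Re(z)>0\}$, and because $F-F_N\to\sqrt{2\pi}-c_\infty(r)$ while $\phi(z)/[\psi(z)(z-\bar{z})]\sim 1/z^2$ at infinity, we have $G(z)=O(1/z^2)$, so $zG(z)/F(z;r)=O(1/|z|)$ in the right half-plane. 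The main obstacle is establishing the uniform bound $|zG(z)/F(z;r)|\le M$ on the closed right half-plane. Since the function is holomorphic in $\{\Re(z)>0\}$, continuous on the closure, and decays at infinity, a standard semicircular maximum-modulus argument reduces the problem to a bound on $\Re(z)=0$; there, Stirling's asymptotic gives $|F(iy;r)|\to\sqrt{2\pi}$, and the explicit cancellation structure of $G$ keeps $|iy\,G(iy)|$ continuous and bounded. Setting $C_N(r):=|\bar{z}|\sup_{\Re(z)>0}|zG(z)/F(z;r)|$ then yields the claimed inequality. The delicate step is matching the oscillations of $(z+r)^{z-1/2}$ against those of $\Gamma(z)$ on the imaginary axis, which is exactly what Stirling's expansion provides.

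Finally, the asymptotic at infinity follows by direct series expansion. Stirling gives $F(z;r) = \sqrt{2\pi}\bigl(1 + \alpha_1(r)/z + O(1/z^2)\bigr)$, and the geometric expansion $1/(z+n) = 1/z - n/z^2 + O(1/z^3)$ gives $F_N(z;r) = c_\infty(r) + \bigl(\sum_{n=0}^{N-1} c_n(r)\bigr)/z + O(1/z^2)$. Forming the quotient $F_N/F$ and collecting powers of $1/z$ yields
\[
1 - \frac{F_N(z;r)}{F(z;r)} = \Bigl(1 - \frac{c_\infty(r)}{\sqrt{2\pi}}\Bigr) + \frac{D_N(r)}{z} + O(1/z^2),
\]
with $D_N(r)$ an explicit linear combination of $c_\infty(r)$, $\alpha_1(r)$, and $\sum_{n=0}^{N-1} c_n(r)$.
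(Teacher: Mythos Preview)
Your proof follows essentially the same route as the paper: rescale by $\phi$ so that the problem becomes polynomial interpolation of $g=\phi F$ at $z_1,\ldots,z_{N+1}$ (and at $\bar z$ via the choice of $r$), factor the known zeros out of the remainder, and then bound the leftover quotient using that $F$ is bounded and nonvanishing on $\Re(z)>0$; the asymptotic computation at infinity is also carried out the same way in both. Your execution is in fact tighter than the paper's at the key step: the paper writes $G_N-P_N=\tilde C_N(r)\,(1/z-1/\bar z)\,\psi(z)$ with a \emph{constant} $\tilde C_N$---which cannot hold literally, since the left side is transcendental---and then patches this by ``choosing $\tilde C_N$ appropriately'', whereas you express the remainder exactly as $g[z_1,\ldots,z_{N+1},\bar z,z]\,(z-\bar z)\,\psi(z)$ via the divided-difference recurrence and then supply a genuine boundedness argument (holomorphy of $G$ on the half-plane, $O(1/z^2)$ decay, and maximum modulus on $\Re(z)=0$). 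One small point you should flag explicitly is the case $\bar z=\infty$: there the algebraic identity $z-\bar z=-z\bar z\,(1/z-1/\bar z)$ and the factor $(z-\bar z)$ in your definition of $G$ are formal, so that case needs a separate (but easier) treatment in which the extra interpolation condition is simply $c_\infty(r)=\sqrt{2\pi}$ and $(1/z-1/\bar z)$ reduces to $1/z$.
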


\begin{proof}
	We first write the approximation \eqref{eqn:F_N} as
	\[
		\Gamma_N(z;r) = (z+r)^{z-\frac{1}{2}}e^{-(z+r)}\frac{P_N(z;r)}{\phi(z)}, \qquad \phi(z) =\frac{\Gamma(z+N)}{\Gamma(z)}=z(z+1)(\ldots)(z+N-1).
	\]
	Thus the polynomial $P_N(z;r) = \phi(z)F_N(z;r)$ is of degree $N$, and satisfies
	\[
		P_N(z) = G_N(z;r)(1+\epsilon_N(z;r)), \qquad G_N(z;r) = \frac{\Gamma(z+N)e^{z+r}}{(z+r^{z-\frac{1}{2}})}=\phi(z)F(z;r),
	\]
	where the relative error is
	\[
		\epsilon_N(z;r) = 1-\frac{\Gamma_N(z;r)}{\Gamma(z)} = 1-\frac{P_N(z;r)}{G_N(z;r)}.
	\]
	Applying \eqref{eqn:Hf}, we find $P_N(z_j,r) = G_N(z_j;r)$ for $j=1, 2, \ldots, N+1$. By choosing $r =r(\bar{z})$, the approximation is also made exact at $z=\bar{z}$ which is possibly $\infty$, and so the interpolation error is of the form
	\[
		G_N(z;r) - P_N(z;r) = \tilde{C}_N(r)\left(\frac{1}{z}-\frac{1}{\bar{z}}\right)\psi(z), \qquad \psi(z) = \prod_{j=1}^{N+1}(z-z_j),
	\]
	for some constant $\tilde{C}_N(r)$ that is independent of $z$. Thus, the relative error has the form
	\[
		\epsilon_N(z;r) = \frac{\tilde{C}_N(r)}{G_N(z;r)}\left(\frac{1}{z}-\frac{1}{\bar{z}}\right)\psi(z)
						= \frac{\tilde{C}_N(r)}{F(z;r)}\left(\frac{1}{z}-\frac{1}{\bar{z}}\right)\frac{\psi(z)}{\phi(z)}.
	\]
	But for $\Re(z)>0$, $F(z;r)$ is bounded and non-zero, and so by choosing a new constant $C_N(r)$ appropriately, we obtain the first result.
	
	Now, for large $|z|$, we apply Stirling's formula to $\Gamma(z)$, and expand the poles in $F_N(z;r)$ to find
	\[
		F(z;r) \sim \sqrt{2\pi}\frac{z^{z-\frac{1}{2}}e^{-r}}{(z+r)^{z-\frac{1}{2}}}\left[1+\frac{1}{12z}+\mathcal{O}\left(\frac{1}{z^2}\right)\right], \qquad F_N(z;r) \sim c_\infty(r)\left[1 + \frac{\alpha(r)}{z}+\mathcal{O}\left(\frac{1}{z^2}\right)\right],
	\]
	where
	\[
		\alpha(r) = \frac{1}{c_\infty(r)}\sum_{n=0}^{N-1}c_n(r).
	\]
	Noting that $(1+r/z)^{z-\frac{1}{2}}\sim e^r$, we obtain
	\begin{align*}
		1-\frac{\Gamma_N(z;r)}{\Gamma(z)} &= 1 - \frac{F_N(z;r)}{F(z;r)} \\
		&\sim 1 - \frac{c_\infty(r)}{\sqrt{2\pi}}\left[\frac{1+\frac{\alpha(r)}{z}}{1+\frac{1}{12z}}\right]+\mathcal{O}\left(\frac{1}{z^2}\right) \\
		&\sim 1-\frac{c_\infty(r)}{\sqrt{2\pi}}+\left(\alpha(r)-\frac{1}{12}\right)\frac{1}{z} + \mathcal{O}\left(\frac{1}{z^2}\right).
	\end{align*}
	Upon setting $D(r) = \alpha(r)-\frac{1}{12}$, we obtain the second result.
\end{proof}

\subsection{Numerical Implementation}
\label{Imp}
Once a value of $r$ is chosen and the coefficients are determined, the approximation $\Gamma_N(z)$ is valid in the complex plane, away from the branch cut emanating from $z=-r$ along the negative real axis. In practice, evaluation is only necessary for $\Re(z)\geq \frac{1}{2}$, due to the Euler reflection formula \eqref{eqn:Ref}. We also consider several numerical concerns.
\begin{enumerate}
	\item The computation for $|z|\gg 1$ often leads to numerical overflow along the positive real line, and underflow elsewhere in the complex plane. To mitigate this issue, we compute the approximation via
	\begin{equation}
	\label{eqn:Gamma_Implement}
	\Gamma_N(z;r) = e^{\phi(z)}F_N(z;r), \qquad \phi(z) =\left(z-\frac{1}{2}\right)\ln(z+r)-z-r,
	\end{equation}
	where $F_N(z;r)$ is from \eqref{eqn:F_N}. In particular, we note that $\phi(z)$ is of a computable magnitude when the gamma function is near overflow or underflow limitations. 
	\item For $\Re\left(z\right)<\frac{1}{2}$, we use Euler's reflection formula \eqref{eqn:Ref}, and instead compute $\Gamma(1-z)$ with \eqref{eqn:Gamma_Implement}, i.e.
	\[
		\Gamma(z) \approx \frac{\pi}{\sin(\pi z)\Gamma_N(1-z;r)}, \qquad \text{for $\Re(z)<\frac{1}{2}$}.
	\]
	We also take care to avoid loss of precision due to range reduction in the sine function. If $m<\Re(z)\leq m+1$, write $z = -m + \delta z$, and compute
	\[
		\sin(\pi z) = \sin(\pi (-m+\delta z)) = (-1)^m \sin(\pi \delta z).
	\]
\end{enumerate}

\section{Numerical Tests}
We will now test several interpolation strategies, and examine their behavior. The errors will be constructed using variable precision arithmetic, and thus the errors can be examined for large and small $|z|$. We also omit the use of the reflection formula to study the errors near the branch cut. In practice, the computations would produce symmetric errors about the line of symmetry, with a slight loss of accuracy near the poles (see \cite{Rump} for a detailed discussion). Experiments in double precision arithmetic produced a degradation of accuracy due for $N>6$ due to the ill-conditioned system \eqref{eqn:Hf}, and loss of precision.

\subsection{Fixed poles with interpolation along the postive real axis}
\begin{figure}[hb!]
	\centering
	\includegraphics[width=0.32\linewidth]{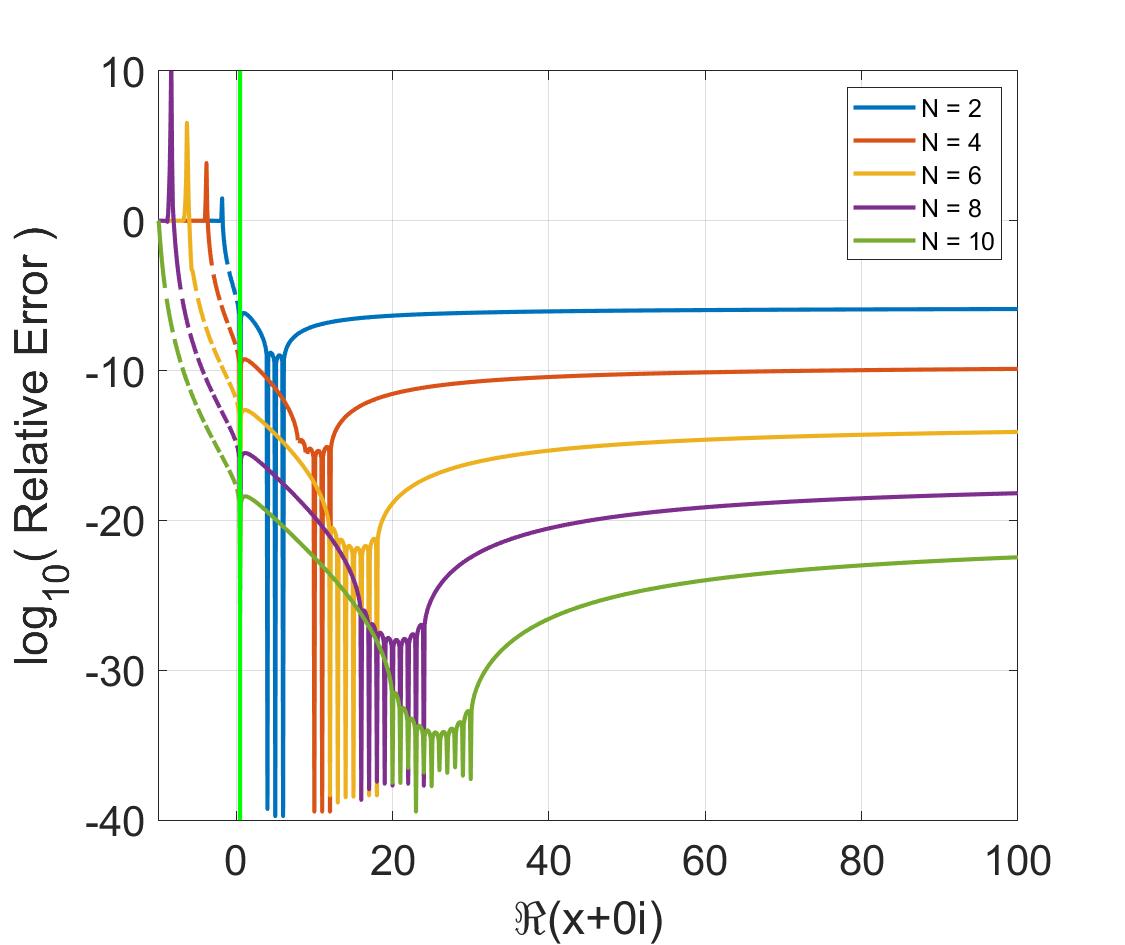}
	\includegraphics[width=0.32\linewidth]{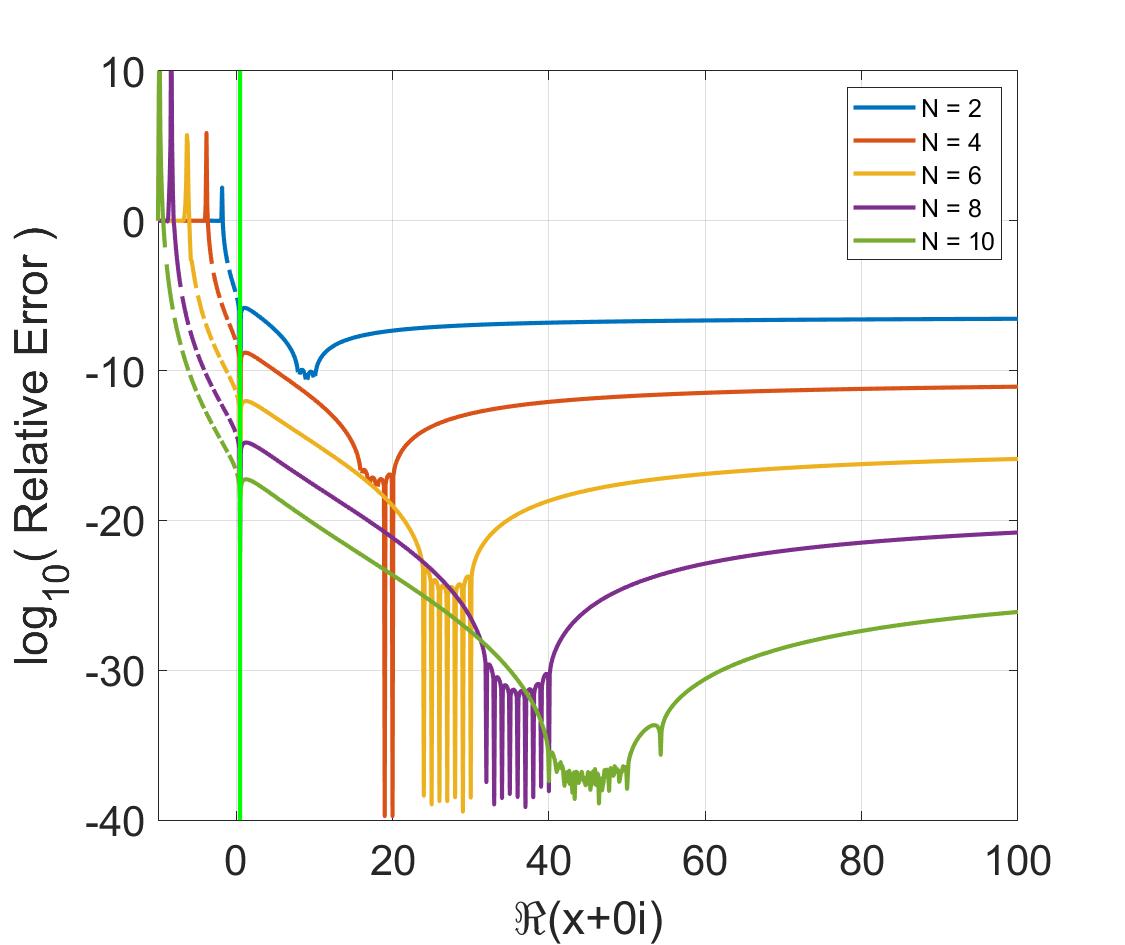}
	\includegraphics[width=0.32\linewidth]{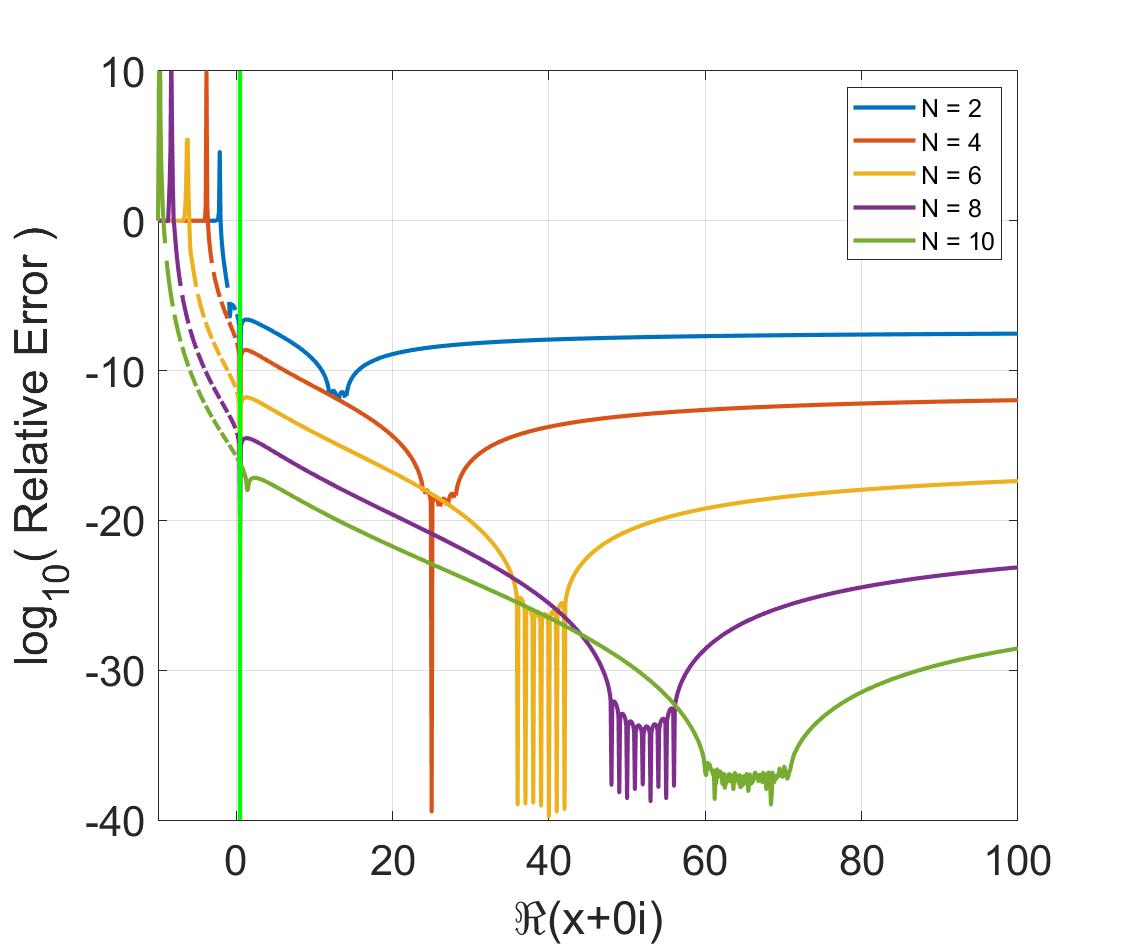}
	\includegraphics[width=0.32\linewidth]{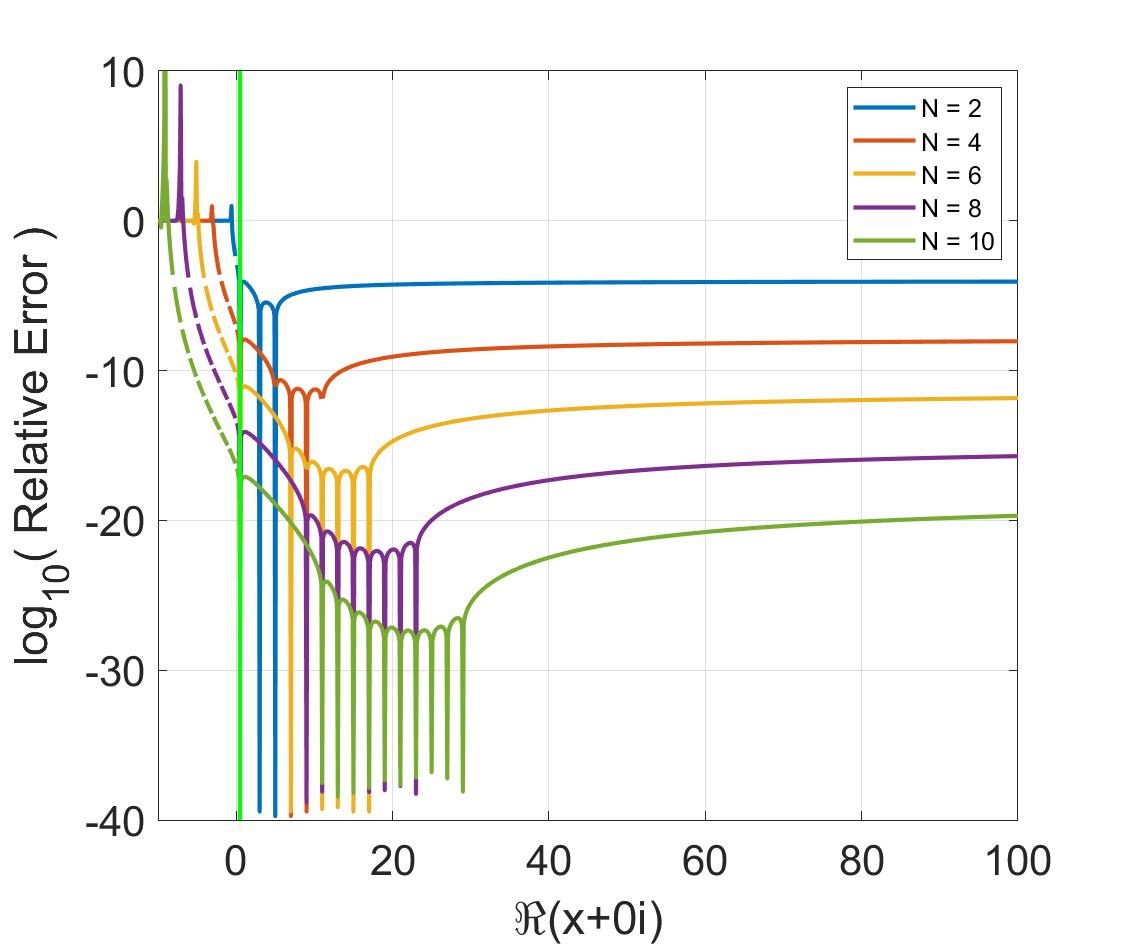}
	\includegraphics[width=0.32\linewidth]{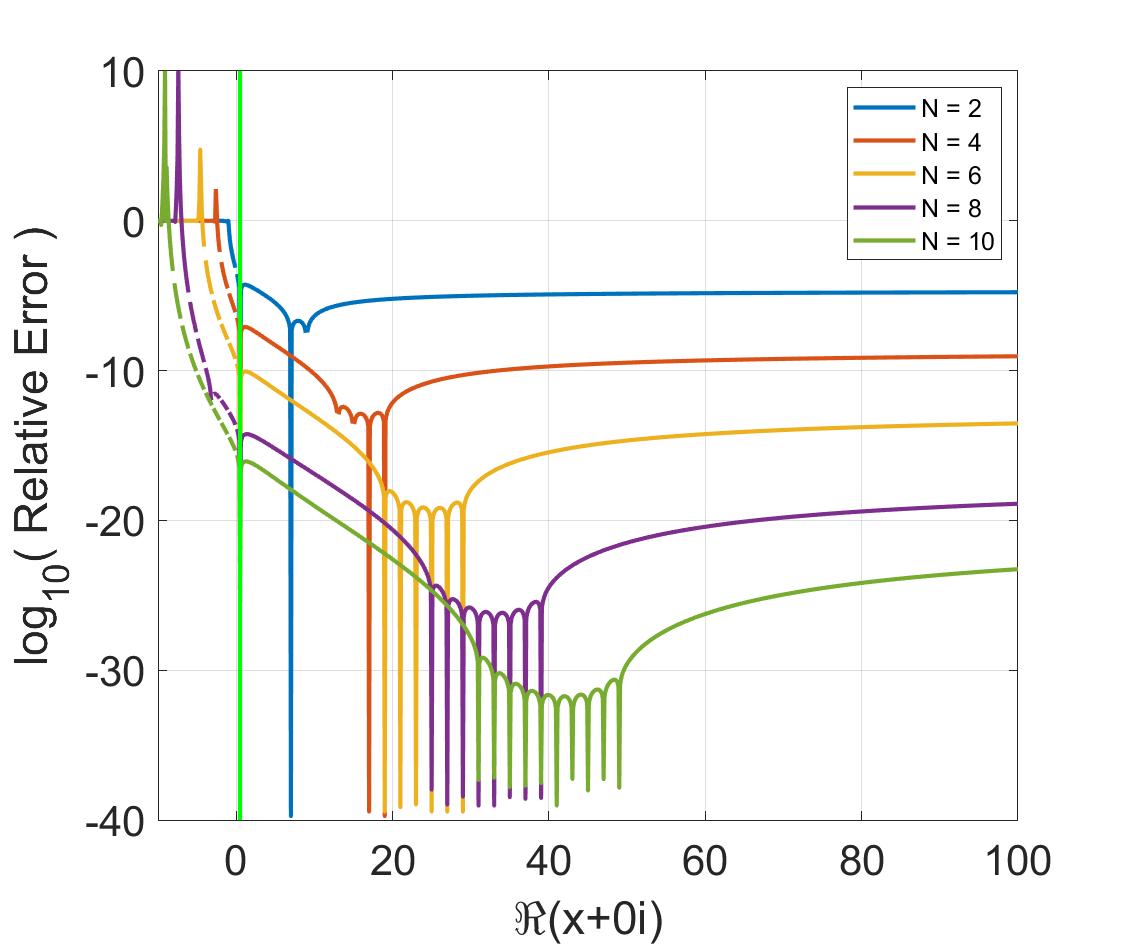}
	\includegraphics[width=0.32\linewidth]{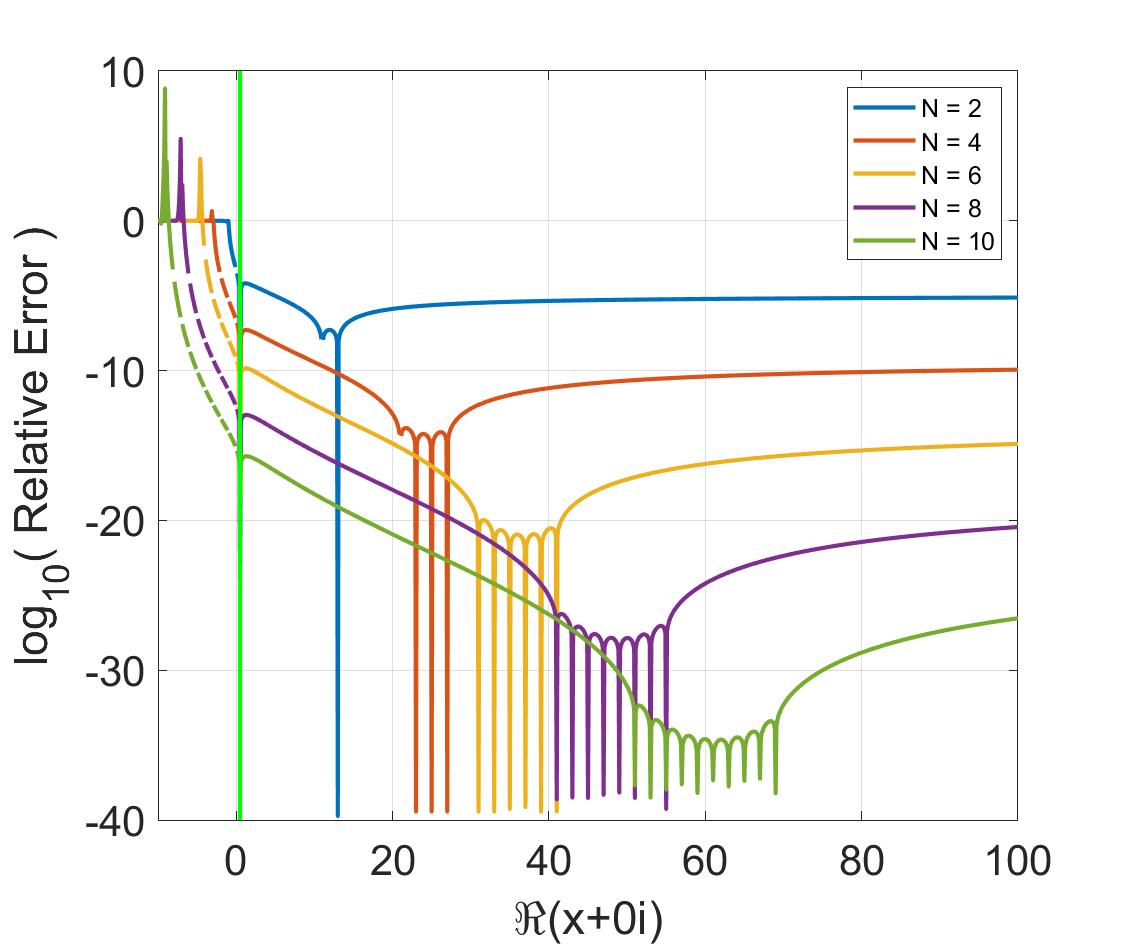}
	\includegraphics[width=0.32\linewidth]{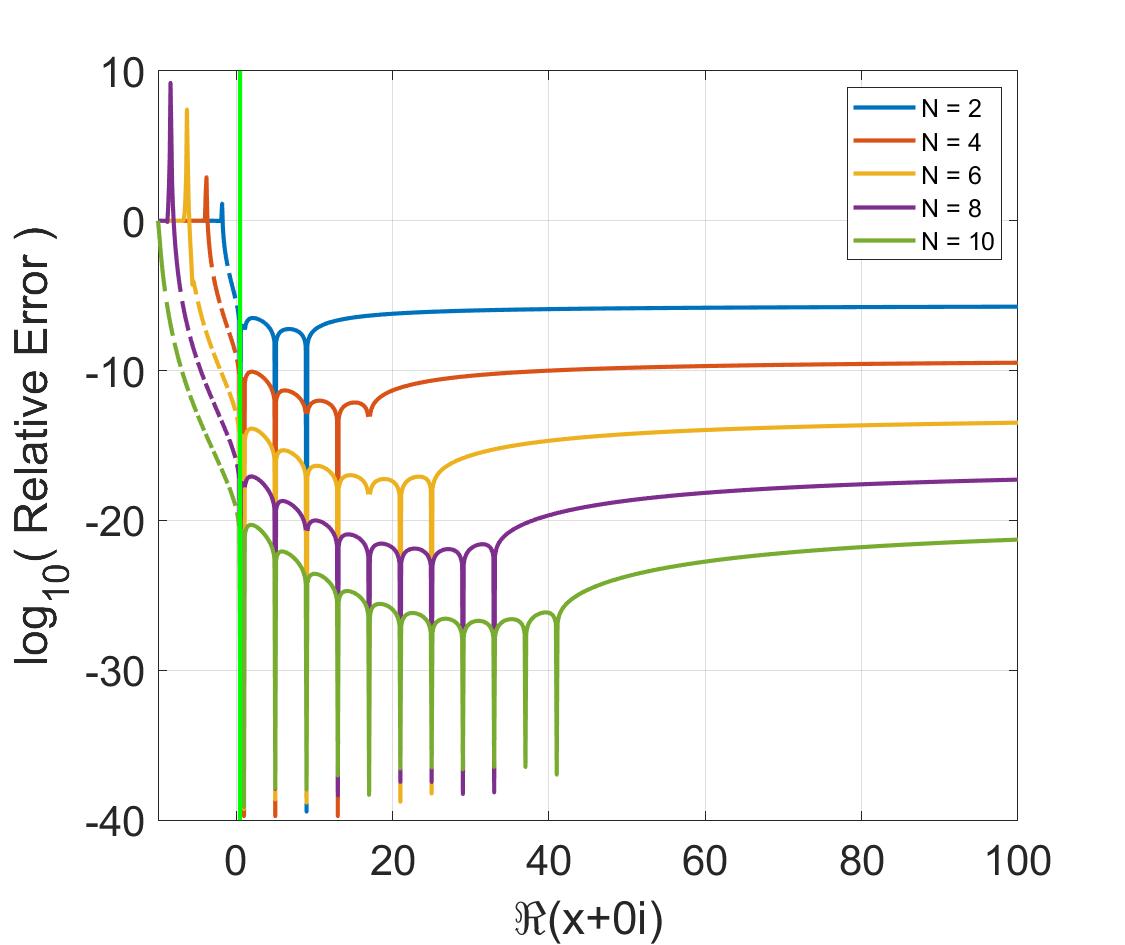}
	\includegraphics[width=0.32\linewidth]{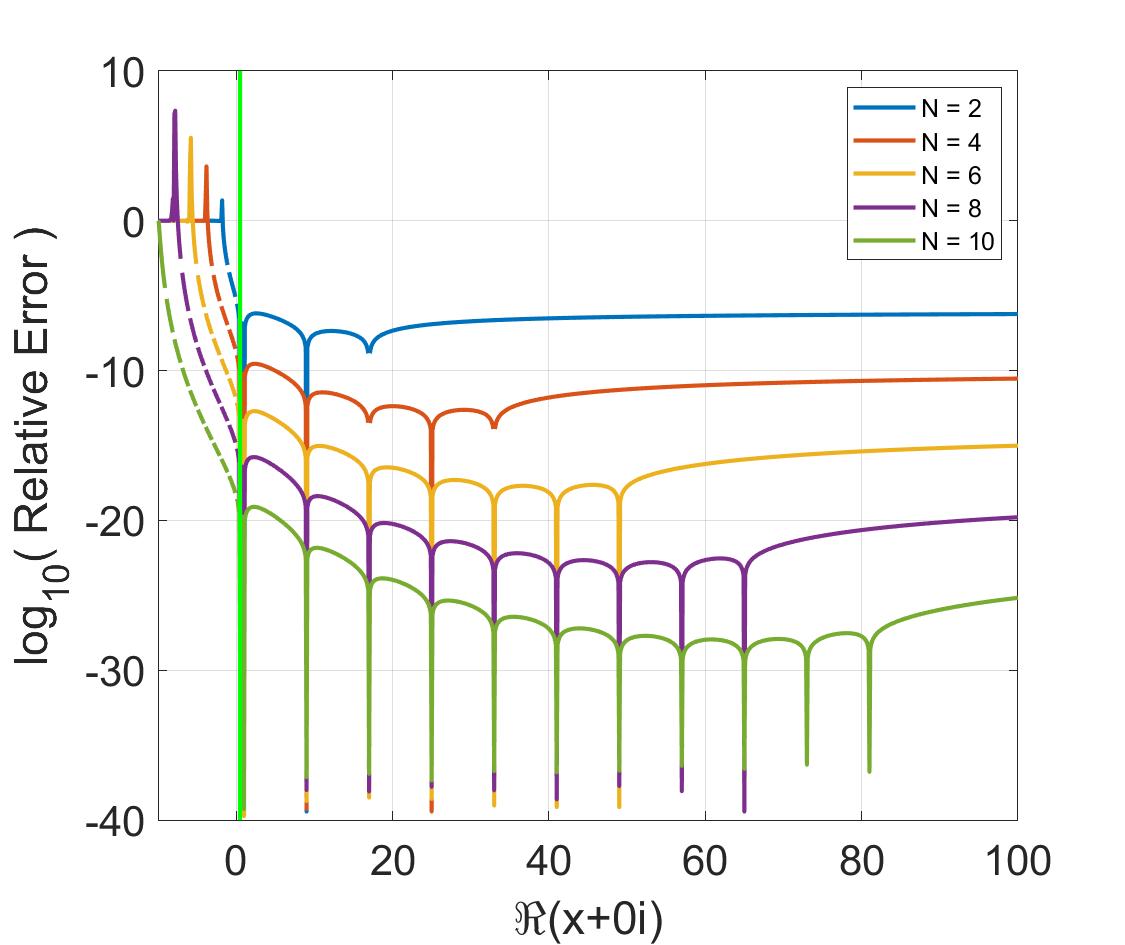}
	\includegraphics[width=0.32\linewidth]{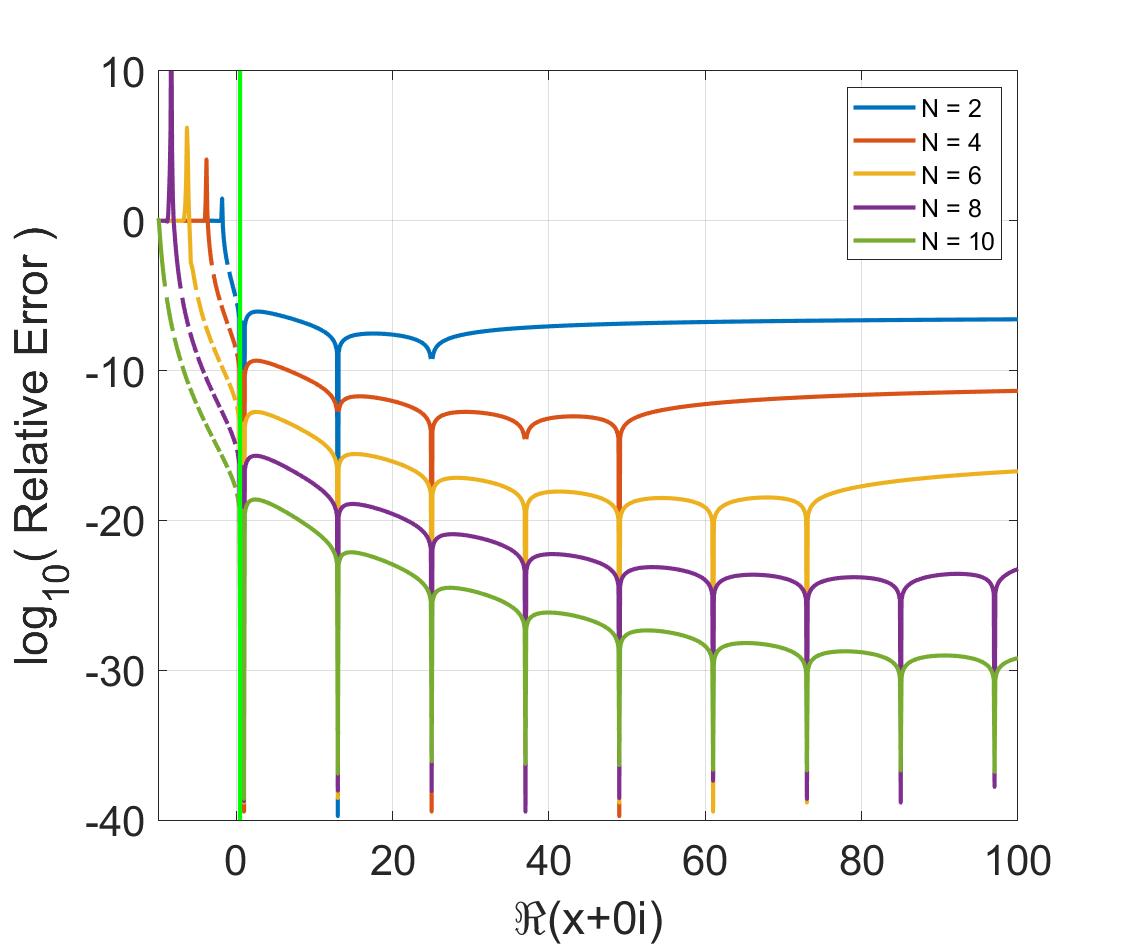}
	\caption{Relative error of $\Gamma(z)$ along the real line $z = x$, with interpolation at the the consecutive integers (top), narrowly spaced integers (middle) and widely spaced integers (bottom).}
	\label{fig:I1}
\end{figure}
The Lanczos approximation takes the interpolation points to be the first $N+1$ consecutive integers. We first consider several generalizations to this choice, where the points are integers dispersed over the real line. They are:
\begin{itemize}
	\item no spacing:  $\{z_n\}_{n=0}^{N} = n+k+1$, for $k=2, 4, 6$ (top row, Figure \ref{fig:I1}).
	\item narrow spacing:  $\{z_n\}_{n=0}^{N} = 2n+k+1$, for $k=2, 4, 6$  (middle row, Figure \ref{fig:I1}).
	\item wide spacing:   $\{z_n\}_{n=0}^{N} = kn+1$, for $k=2, 4, 6$ (bottom row, Figure \ref{fig:I1}).
\end{itemize}
\begin{figure}[hb!]
	\centering
	\includegraphics[width=0.48\linewidth]{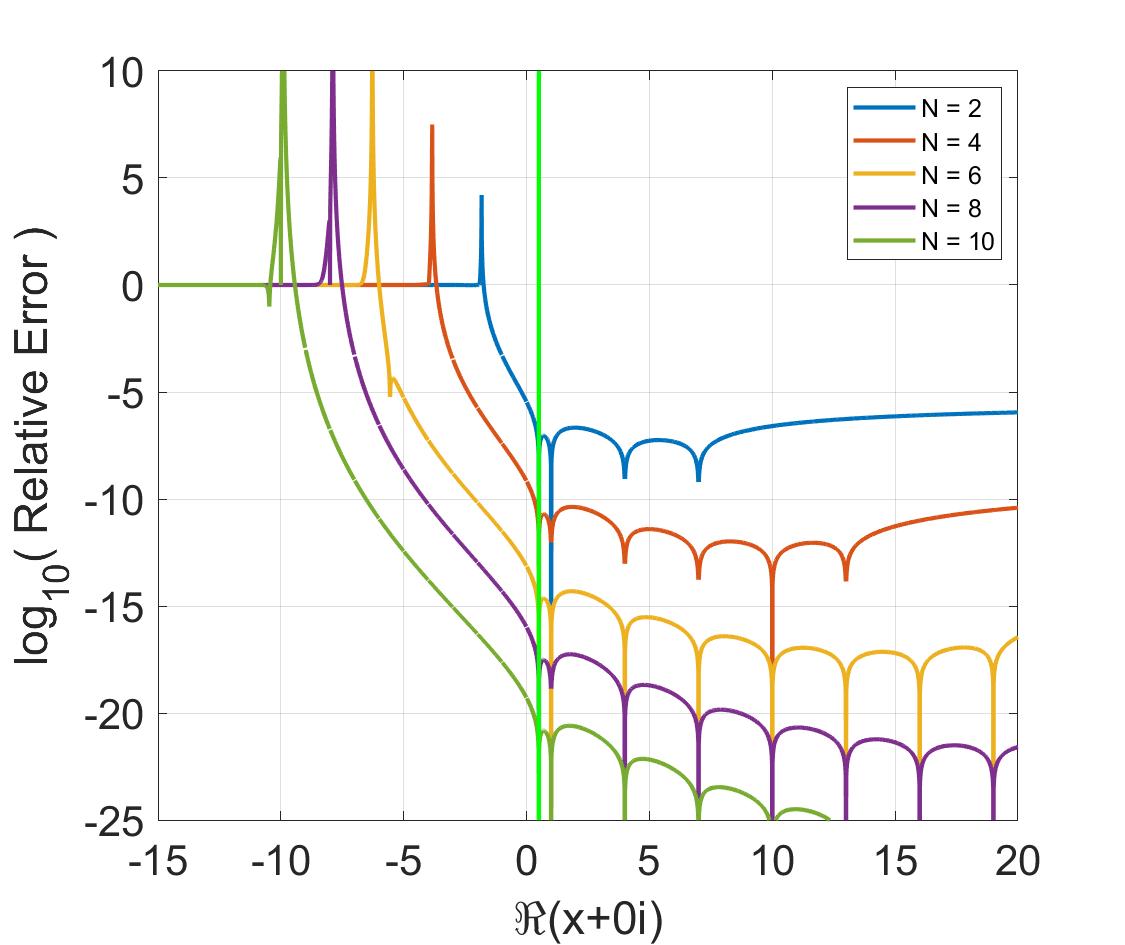}
	\includegraphics[width=0.48\linewidth]{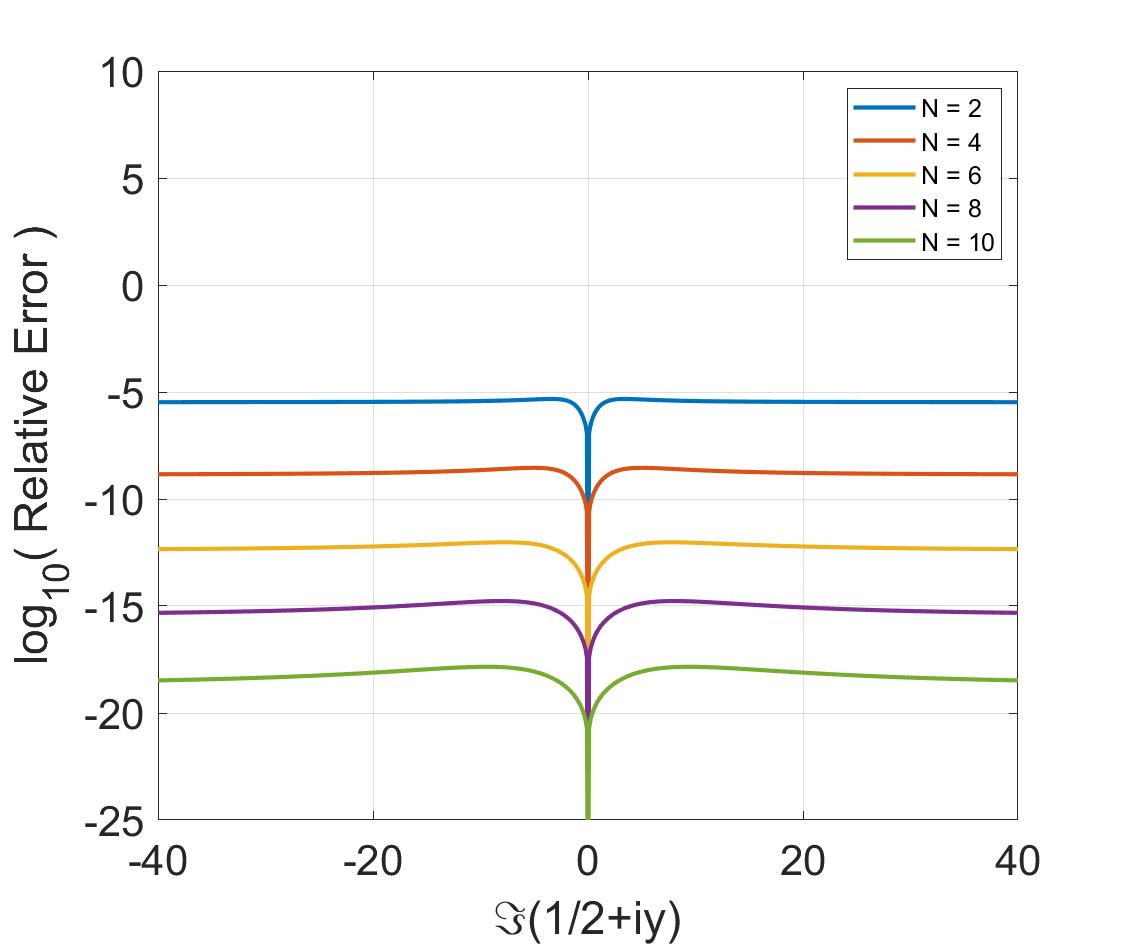}
	\includegraphics[width=0.48\linewidth]{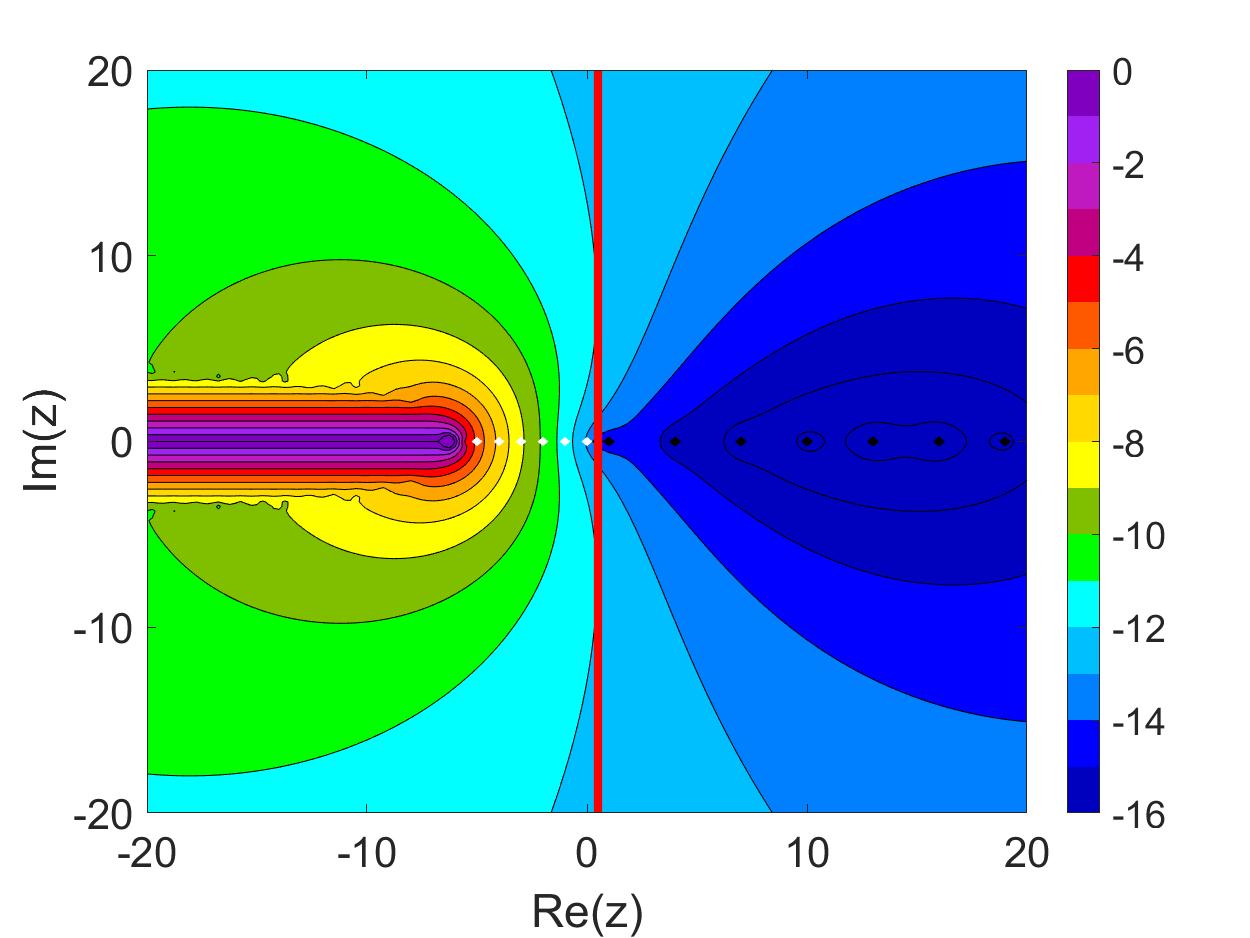}
	\includegraphics[width=0.48\linewidth]{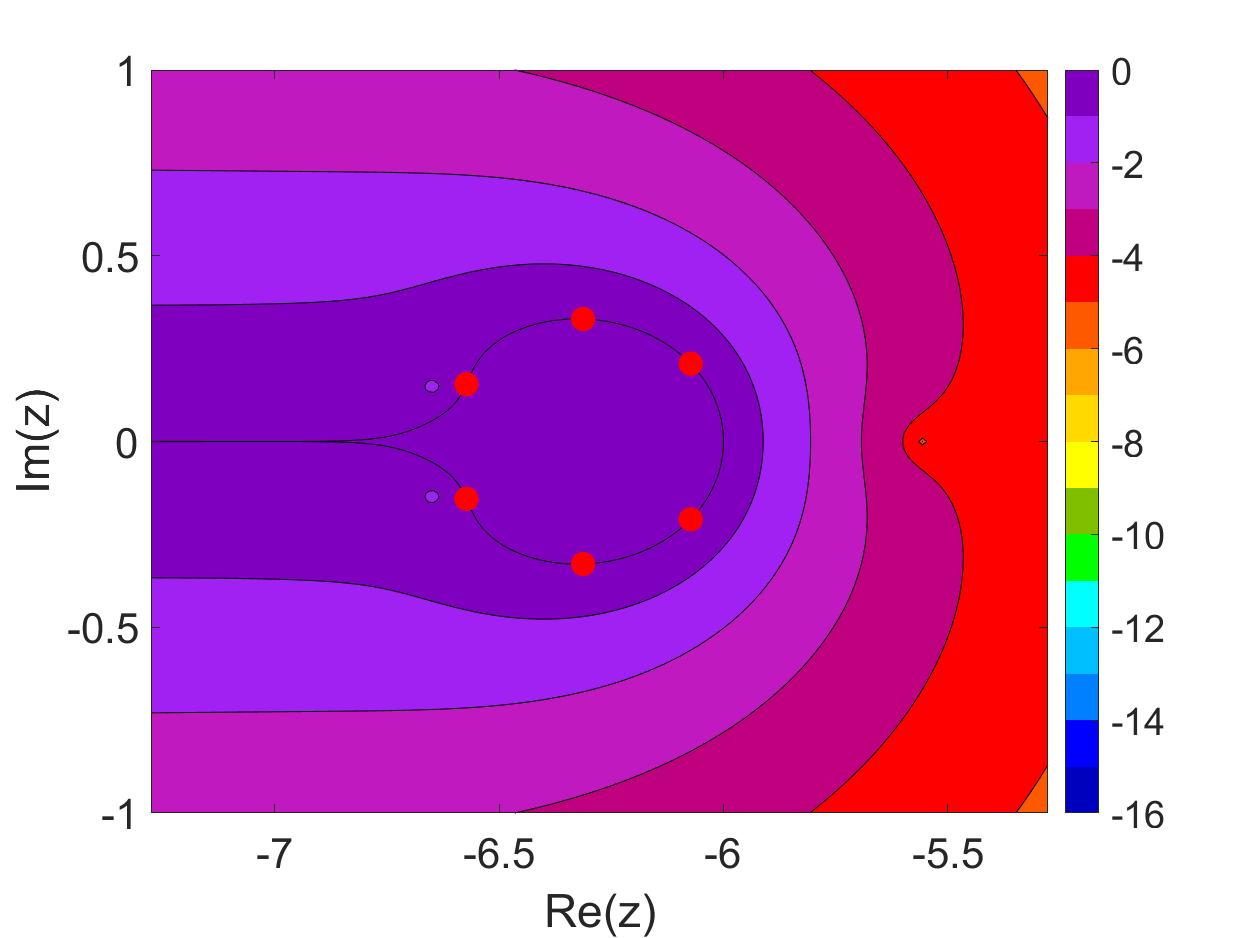}
	\caption{Relative error of $\Gamma(z)$ along the real line $z = x$ (top left), and the imaginary line $z=\frac{1}{2}+iy$ (top right) with interpolation at the points $1, 4, \ldots, 3N+1$. The approximation in the complex plane (bottom left) with $N=6$ poles exhibits a spurious branch cut, and spurious zeros near the branch point (zoom-in, bottom right).}
	\label{fig:N2}
\end{figure}
As the spacing becomes wider, so too does the proper interpolating region along the real axis. This is however at the expense of convergence elsewhere in the complex plane, particularly along the line of symmetry.

The overall best fit using interpolation along the real line for $N=6$ poles was found to use $z = \{1,4,7,10,13,16,19\}$. In Figure \ref{fig:N2} we construct this approximation, choosing $r=6.276394363877011$ to produce $\Gamma(\frac{1}{2})$ to high precision. The approximation achieves slightly more than 12 decimals along the line of symmetry, and better than 12 further into the right half plane. (bottom panels of Figure \ref{fig:N2}). The accuracy is slighlty better than the Lanczos approximation.

\subsection{Fixed poles with interpolation along the line of symmetry}
In Figure \ref{fig:Imag_M}, we select interpolation points along the line of symmetry $z = \frac{1}{2}+iy$, choosing the points $\{y_n\}_{n=0}^N = -3N+6n$ as odd integers in conjugate pairs. Since these points will already contain $z = \frac{1}{2}$, we determine $r$ so that $\Gamma(z)$ is exact at  $z = \bar{z} = 1$. As shown in the lower panels of Figure \ref{fig:Imag_M}, the approximation with $N=6$ poles and $r = 6.270484017574683$ produce the most uniform error yet, achieving more than 12 decimals of precision over most of the right half plane. The spurious zeros again cluster near the branch point, and are largely indistinguishable from the distribution in Figures \ref{fig:Spouge}, \ref{fig:Lanczos} and \ref{fig:N2}.

\begin{figure}[ht!]
	\centering
	\includegraphics[width=0.48\linewidth]{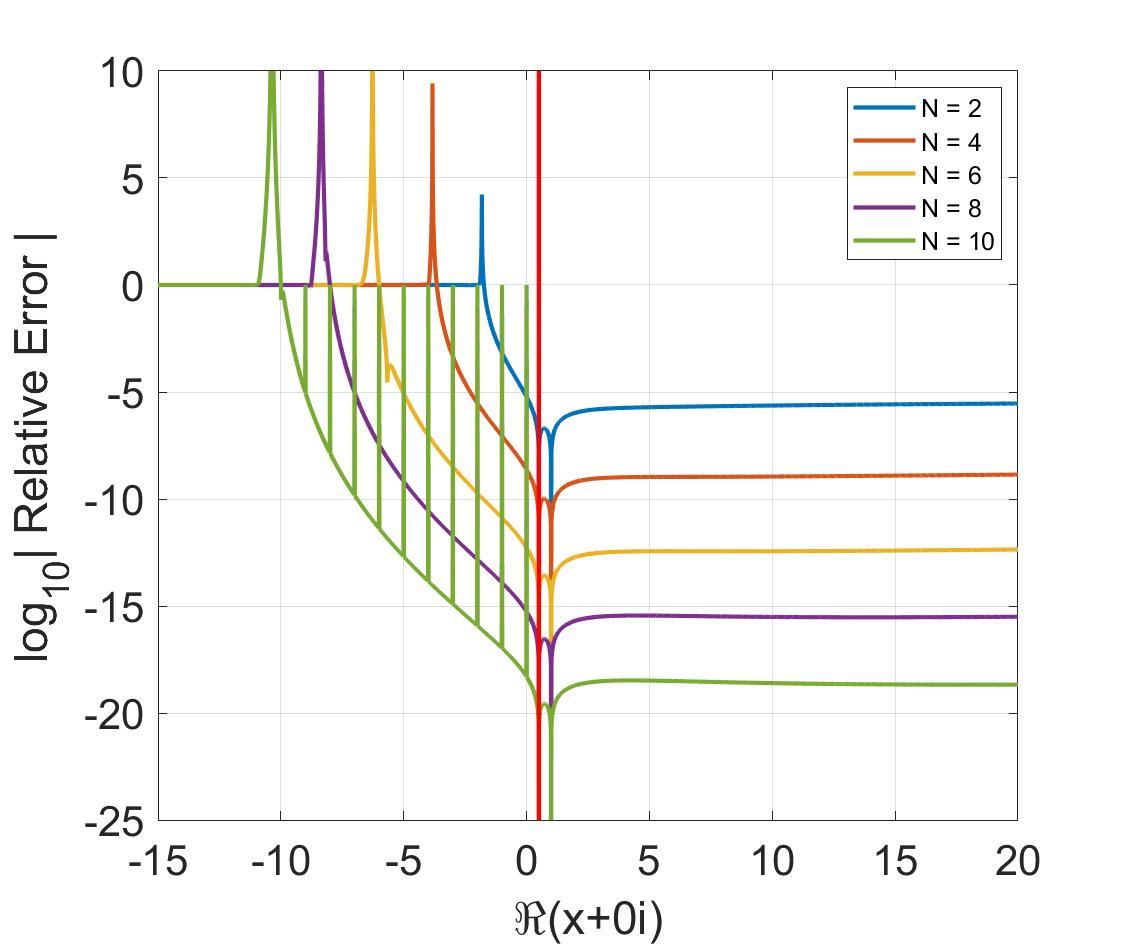}
	\includegraphics[width=0.48\linewidth]{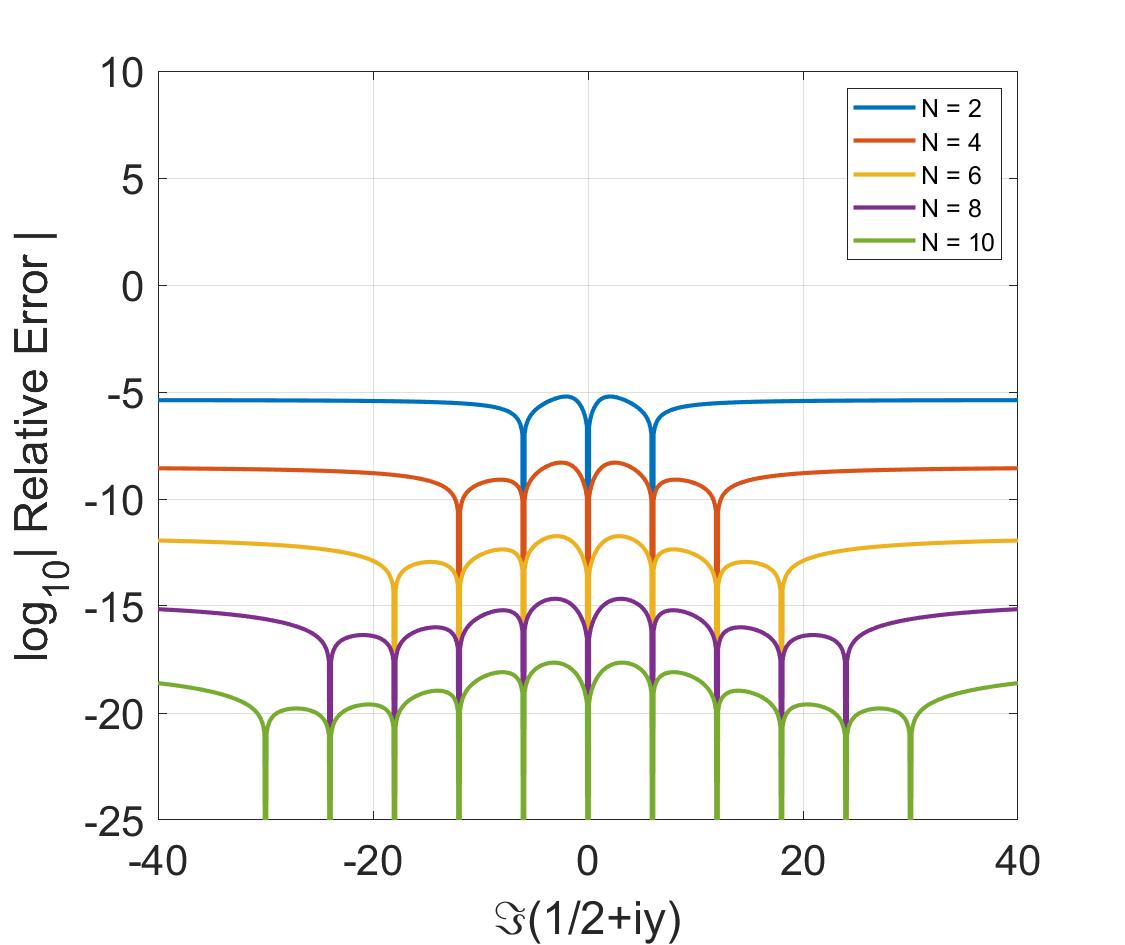}
	\includegraphics[width=0.48\linewidth]{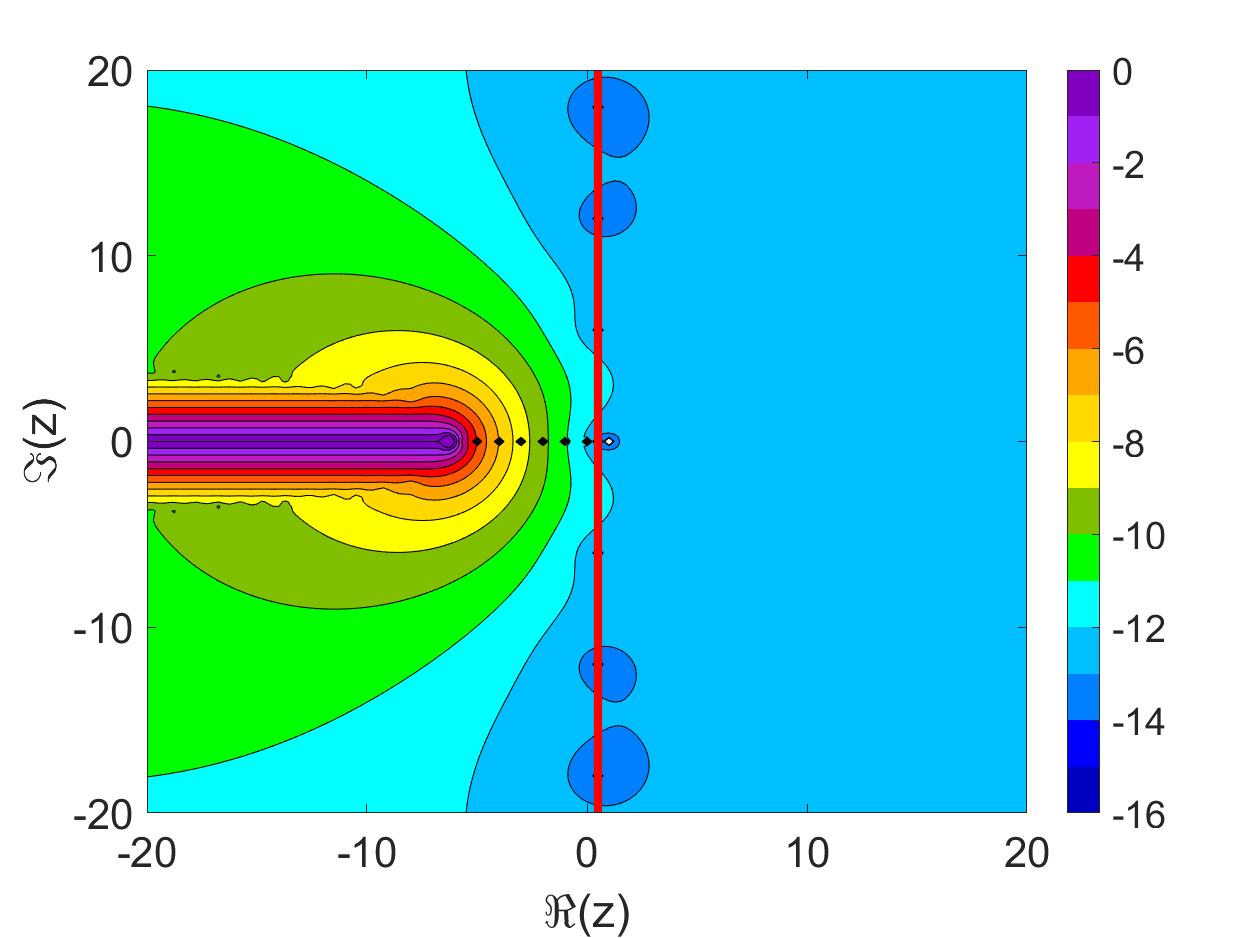}
	\includegraphics[width=0.48\linewidth]{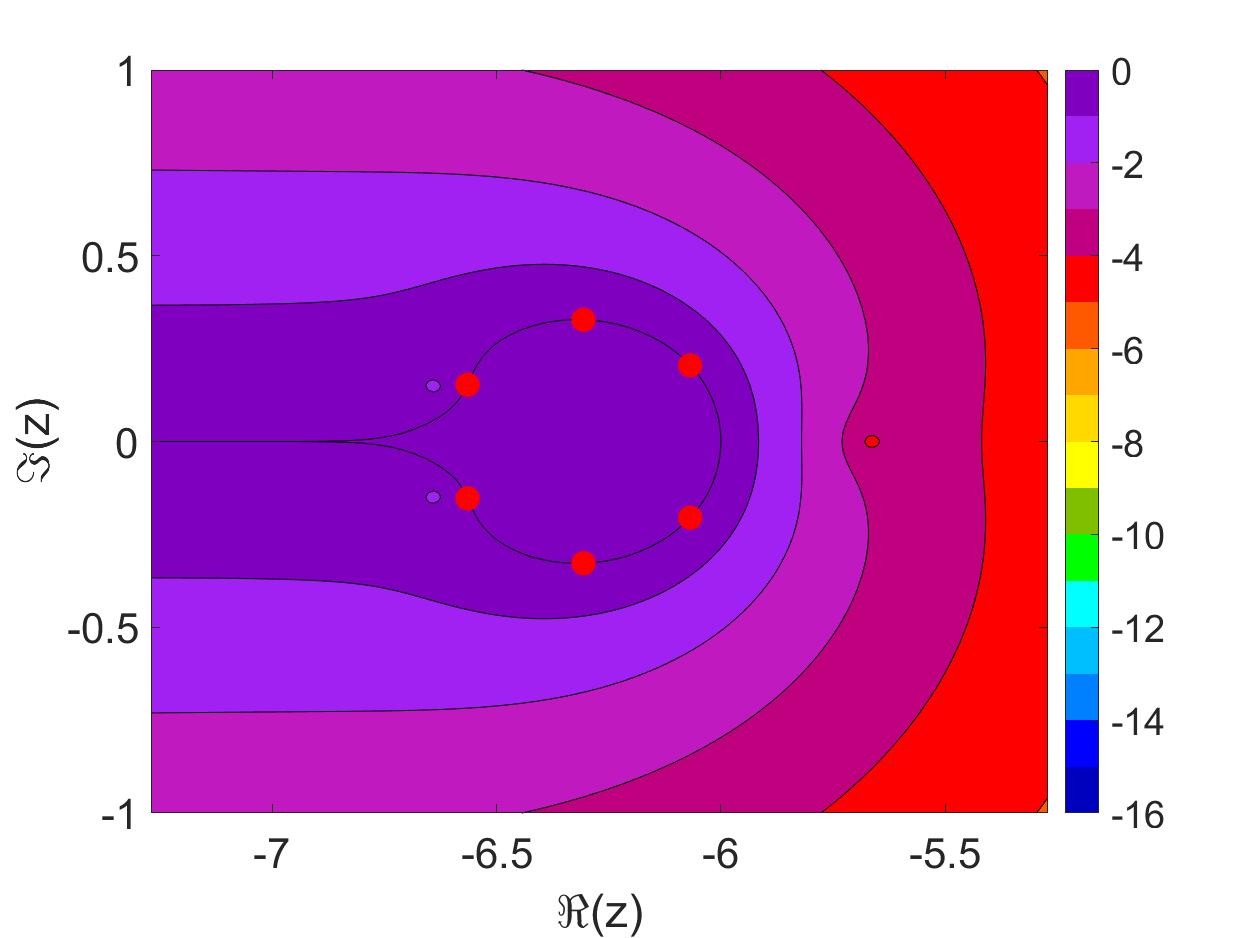}
	\caption{Relative error of $\Gamma(z)$ along the real line $z = x$ (top left), and the imaginary line $z=\frac{1}{2}+iy$ (top right) with interpolation along the imaginary line $z = \frac{1}{2}+iy$. The approximation in the complex plane (bottom left) with $N=6$ poles exhibits a spurious branch cut, and spurious zeros near the branch point (zoom-in, bottom right).}
	\label{fig:Imag_M}
\end{figure}

\subsection{Rational approximation with unspecified poles}
\begin{figure}[ht!]
	\includegraphics[width=0.48\textwidth]{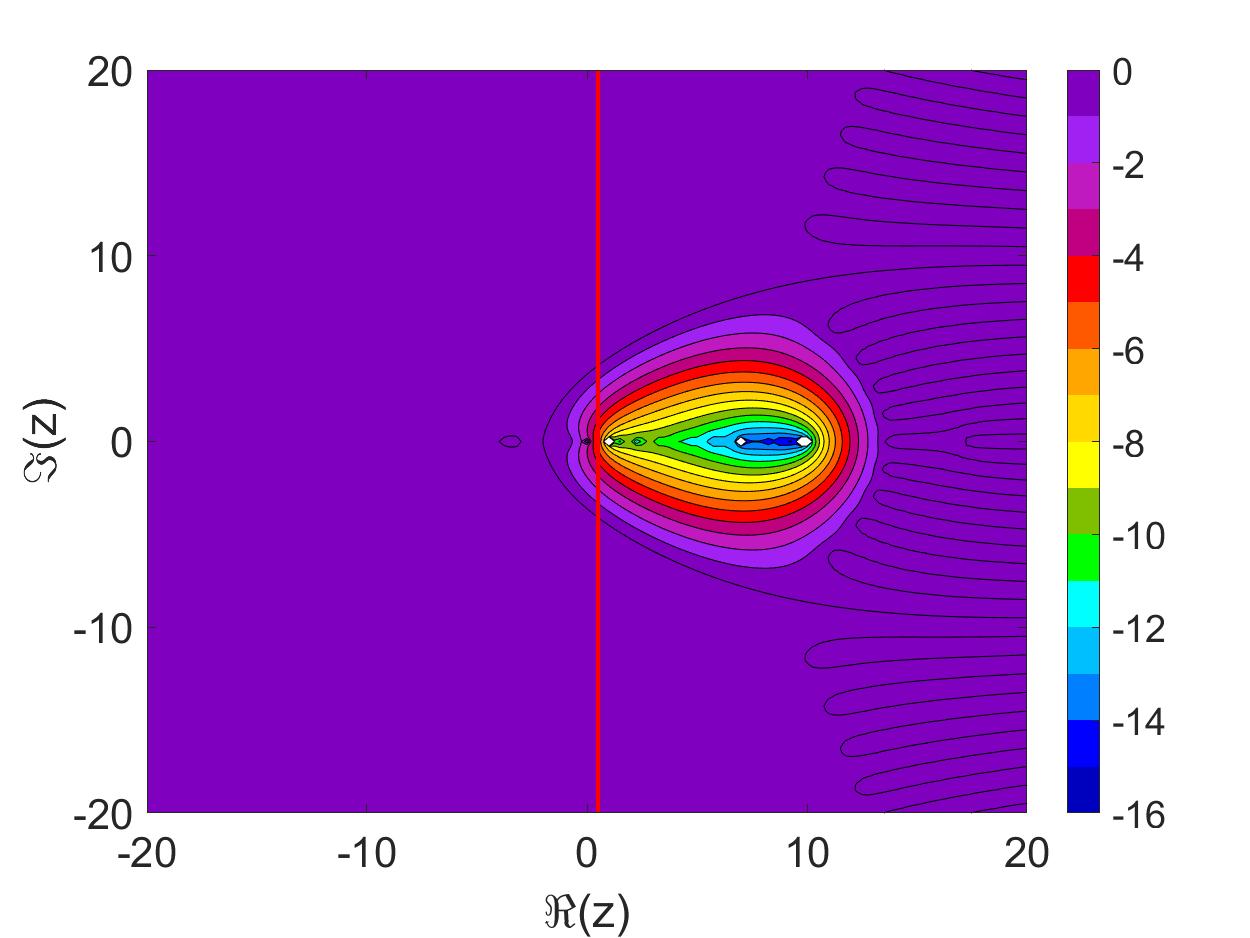}
	\includegraphics[width=0.48\textwidth]{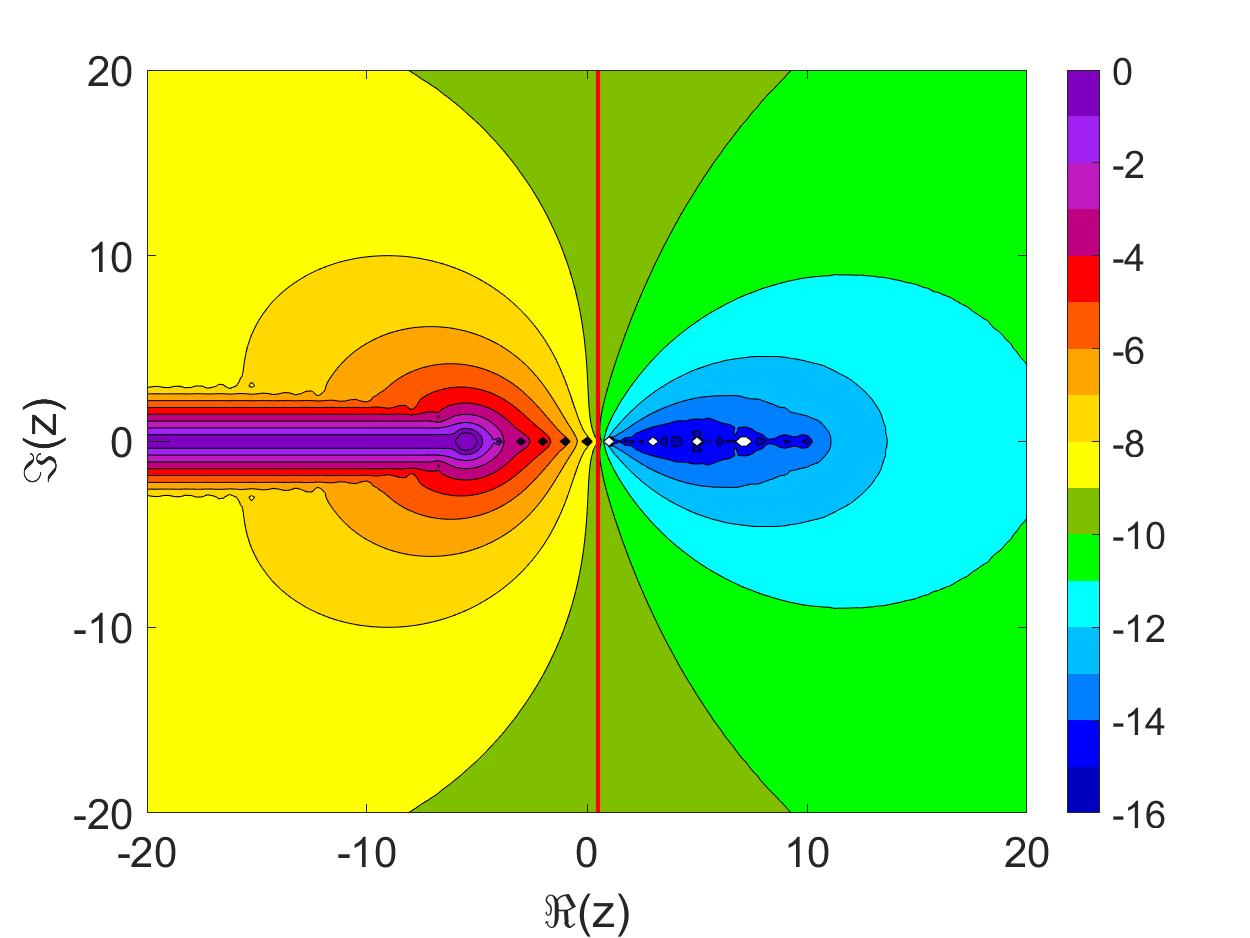}
	\includegraphics[width=0.48\textwidth]{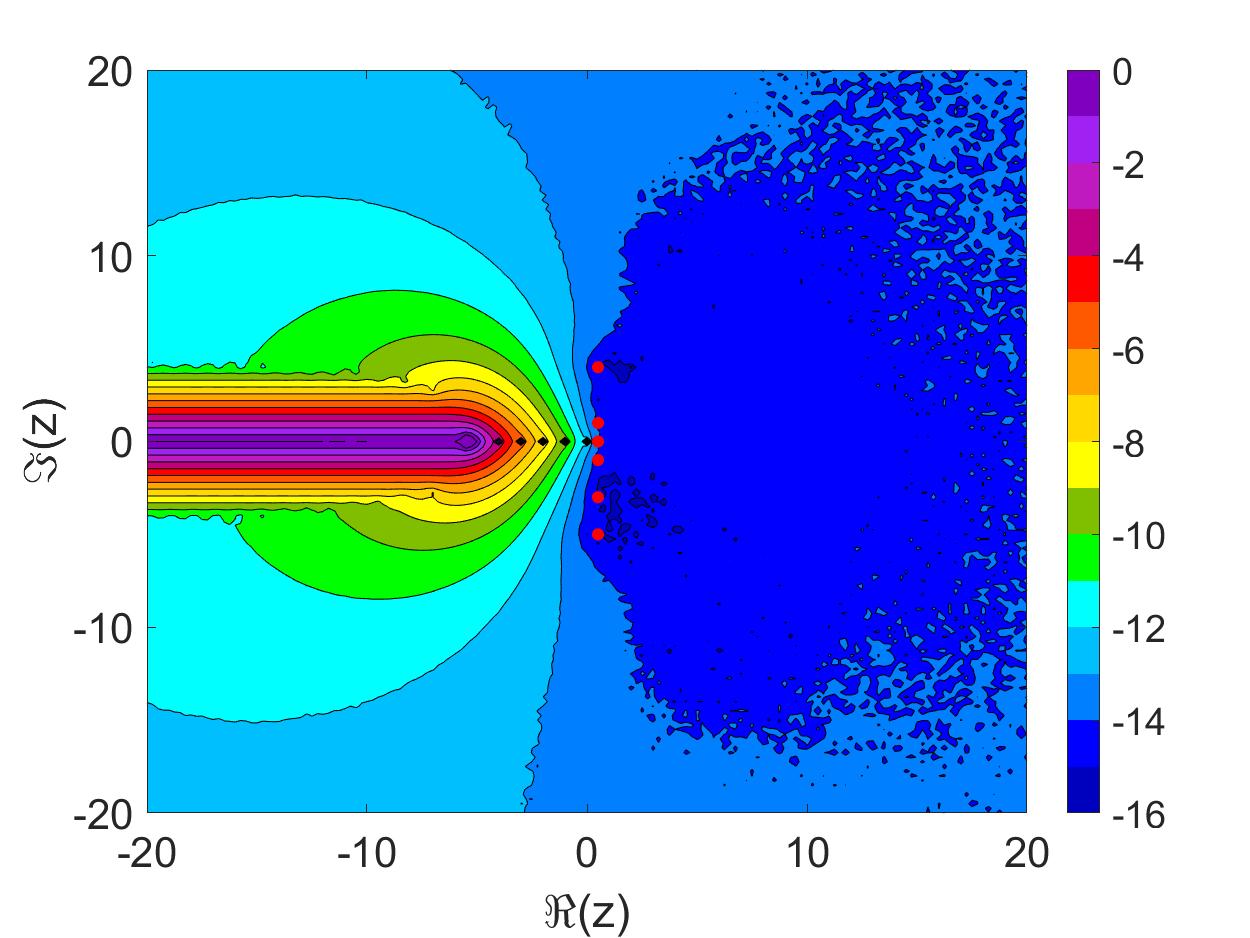} \hspace{5mm}
	\includegraphics[width=0.48\textwidth]{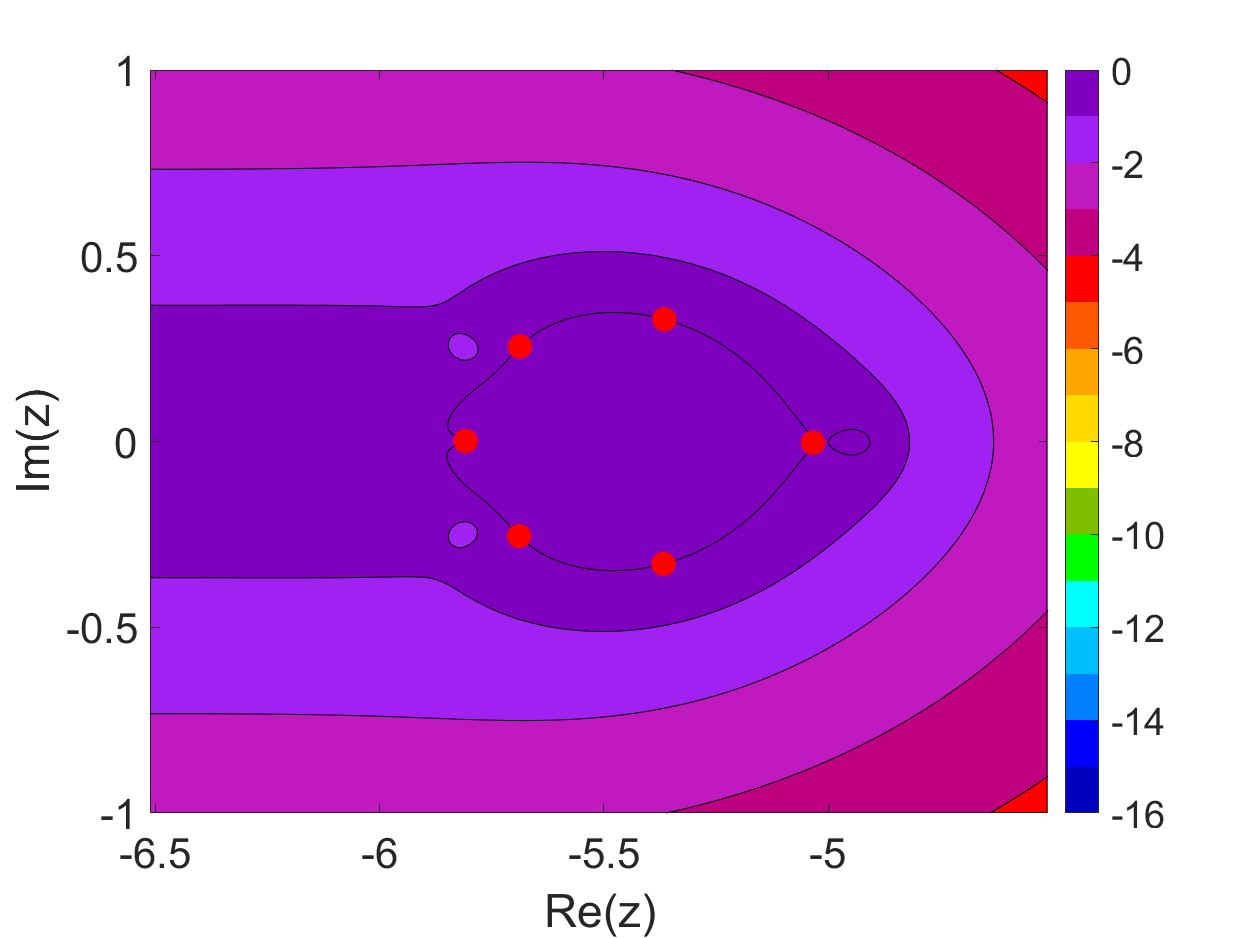}
	\caption{A filled contour plot of $\log_{10}(1-R(z)/\Gamma(z))$, with $R(z)$ a barycentric rational interpolant determined by The AAA algorithm \cite{AAA} applied to the gamma function (top left), and the scaled gamma function \eqref{eqn:Fr} (top right), with $r=5.51$ and 100 equally spaced test points from $[1,10]$. Results are best using 81 points equally spaced points along the line $[\frac{1}{2}-40i,\frac{1}{2}+40i]$ (bottom left, right). The zeros cluster around the branch point (bottom right).}
	\label{fig:AAA}
\end{figure}
We now turn our attention to rational approximation. Recently Nakatsukasa, S{\'e}te and Trefethen \cite{AAA} developed the AAA algorithm, which is quite remarkable in its own right, and is a great achievement in numerical analysis. As mentioned in our introductory remarks, a rational function $R(z)$ represented in barycentric form is adaptively constructed from $f(z)$, ensuring minimal degree up to some prescribed fixed precision $\epsilon$. The interpolation points are chosen adaptively from the test points using a greedy algorithm, and the error operates on the remaining test points, which in turn uses minimal function evaluation to retain algorithmic efficiency. In \cite{AAA}, the algorithm was applied directly to $\Gamma(z)$. The relative error for the rational function $R(z)$ as defined by the Matlab command \texttt{R = aaa(@gamma,linspace(1,10));} is shown in the top left panel of Figure \ref{fig:AAA}. Within an elliptical region containing the interval $[1,10]$, high precision is indeed achieved.

In light of our interpolation algorithm above, accuracy is limited by the fact that large $|z|$ asymptotics of $\Gamma(z)$ cannot be captured by a rational approximation. If we instead apply the AAA algorithm to the scaled gamma function \eqref{eqn:Fr}, the approximation is now of the form \eqref{eqn:R}, and improves dramatically (top right panel of Figure \ref{fig:AAA}). Indeed, with the same 100 test points from $[1, 10]$, nearly 10 decimals of precision are obtained over the right half-plane. Since there are 6 poles, we take $r=5.51$ (we found that the choice of $r$ did not need to be as specific), which produces a branch cut along the negative real axis emanating from $z=-5.51$.

Even better though (bottom left panel), is if we sample along the line of symmetry. With a mere 81 evenly spaced points along the segment $z = \frac{1}{2}+iy$, with $|y| \leq 40$, we produce  (lower left panel) more than 13 decimals of precision near the line of symmetry, and 14 over a large region of the right half plane! The gain in accuracy can almost certainly be attributed to the freedom allotted by varying the poles. Again, since the reflection formula would be utilized for the left half plane, the AAA algorithm ensures high precision over the whole complex plane.

The Matlab code \texttt{aaa.m} is part of Chebfun, and is available online. The Matlab function \verb|gamma_aaa| presented below in Table \ref{tab:AAA} uses the coefficients generated by the AAA algorithm. The zeros of $R(z)$, depicted in the lower right panel of Figure \ref{fig:AAA}, can be seen to cluster near the branch point at $z=-r$, but with a slightly different distribution from the preceeding constructions. See \cite{AAA}, and references therein, for more details.

One great advantage of \verb|gamma_aaa| is that the construction is obtained in double precision; numerical roundoff is minimized by making use of singular value decompositions on the tall Loewner matrix of size $81\times N$. Perhaps one drawback to the implementation is that the weights and function values are complex. However this is quite easily mitigated by demanding $\Im(\Gamma(x+0iy)) =0$, as shown in the code.
\begin{table}[ht]
	\caption{Matlab function for $\Gamma(z)$ yielding better than 13 digits of precision over the complex plane.}
	\label{tab:AAA}
\begin{lstlisting}
function g = gamma_aaa(zz)
% gamma function using a scaled barycentric rational interpolant. The numerical
% parameters can be reconstructed using
% >>[~,~,~,~,t,f,w] = aaa(@(z) gamma_B(z).*exp(z+5.51)./(z+5.51).^(z-1/2),1/2+1i*(-40:40),'tol',2*eps);
z	= zz(:);
r	= 5.51;
t	= 0.5 + 1i*[0 -1 1 40 -5 4 -3];
w	= [	-0.058033315398988594147056119254557;
		-0.12329392903700113481857414399201-0.05023735799303798155168720995789i;
		-0.072017314427899076223482666136988+0.029346047538194301729230772934898i;
		-0.73570545082472338371815112623153+0.35269523425582927078636430451297i;
		 0.39424018689617629229715589644911-0.046173606361601587932952384107921i;
		-0.10309397777341289259567247427185+0.04351009147705412610784847515788i;
		-0.17024770255373244953744915619609-0.32884604768510888872512509806256i];
f	= [ 722.24538019924227683077333495021;
		-47.561377245304413463600212708116+245.59392283177459148646448738873i;
		-47.561377245304413463600212708116-245.59392283177459148646448738873i;
		  2.3652595366167963319981026870664-1.1292734670349124925792239082512i;
		 -7.7668988926260489336073078447953+10.095560385519366519702089135535i;
		-14.060483019799770332269872596953-14.194015555290931729359726887196i;
		-27.239490936407644738892486202531+24.743535230939201596811471972615i];
I	= find(imag(z)==0  );						%ensure gamma(x) is real
J	= find(real(z)<=1/2);						%z for reflection formula
if ~isempty(J),	z(J)=1-z(J);	end				%reflect values
C	= 1./(z-t);									%Cauchy matrix
g	= (C*(w.*f))./(C*w);						%rational interpolant
for j=1:length(t),	g(z==t(j))= f(j);	end		%fix NAN at interp. pts.
g	= g.*exp((z-1/2).*log(z+r)-z-r);			
if ~isempty(I),		g(I) = real(g(I));	end		%make gamma(x) real
if ~isempty(J)									%apply reflection formula
	m	= fix(real(z(J)));						%avoid range reduction
	z(J)= z(J) - m;								
	g(J)= (-1).^m*pi./(g(J).*sin(pi*z(J)));		%reflect values 
end
g	= reshape(g,size(zz));
end
\end{lstlisting}
\end{table}

\subsection{Efficiency in double precision}
For a final comparison, we consider evaluation of $\Gamma(z)$ using the AAA-based algorithm, versus a shift-and-truncate approach to the Stirling series \eqref{eqn:SNK}. Overall algorithmic efficiency can be measured in various ways. We argue as follows.

If the goal is to evaluate $\Gamma(z)$ for many values over the complex plane, and maintain a uniform relative precision $\epsilon$, then \verb|gamma_aaa| is designed to be efficient in this way. Alternatively the Stirling series becomes increasingly accurate as $|z|$ increases. In order to maintain precision $\epsilon$ for $\Re(z) =\frac{1}{2}$, the increased accuracy mandated for larger $|z|$ will be destroyed, making it less efficient.

For comparison to the AAA-based algorithm, we implement the Matlab function \verb|gamma_B|, shown in Table \ref{tab:SNK}. In order to ensure $\approx 13$ decimals of relative precision over the right half-plane $\Re(z)\geq \frac{1}{2}$, we found that minimal choices of shifting $z\to z+16$ and truncating at $K = 5$ terms suffice.

Upon investigation of run times and accuracy, the codes are in fact quite similar. The AAA-based code tends to run 5-10\% faster. We conclude that this newly developed approach is competitive with the long-celebrated Stirling formula, in terms of overall efficiency.
\begin{table}[ht]
	\caption{Matlab function for $\Gamma(z)$ yielding 13 digits of precision over the complex plane.}
\label{tab:SNK}
\begin{lstlisting}
function g = gamma_B(z)
% gamma function using a shifted Stirling series
K	= 5;
N	= 16;
c	= log(2*pi)/2;
a	= [1/12, -1/360,  1/1260, -1/1680, 1/1188]; %a_n = B_2n/[(2n)(2n-1)]
J	= find(real(z)<=1/2);						%z for reflection formula
if ~isempty(J),	z(J)=1-z(J);	end				%reflect values
t	= z+N;
gam = a(1)./t;
for k = 2:K										%Horner evaluation of series
	t	= t.*(z+N).^2;
	gam	= gam + a(k)./t;
end
u	= 1;
for n = 0:N-1									%Pochhammer polynomial
	u	= u.*(z+n);
end
lg	= c -(z+N) +(z+N-1/2).*log(z+N) -log(u)+ gam;
g	= exp(lg);
if ~isempty(J)									%apply reflection formula
	m	= fix(real(z(J)));						%avoid range reduction
	z(J)= z(J) - m;								
	g(J)= (-1).^m*pi./(g(J).*sin(pi*z(J)));		
end
end
\end{lstlisting}
\end{table}
\section{Conclusion}
In this paper we have accomplished several tasks. First, we have shown that the methods developed by Lanczos and Spouge for approximating the gamma function can be recovered using techniques of interpolation. Next, we showed that by selecting different interpolating point distributions, the accuracy can be modestly improved, and as a result we obtain high degrees of nearly uniform precision for $\Gamma(z)$ over the right half of complex plane. Finally, we have shown that methods of rational approximation can also be used to do the same in double precision, if one is willing to allow the poles of the approximation to vary.

A general feature of our framework, as well as the earlier approaches of Lanczos and Spouge, is the favorable convergence properties with respect to the number $N$ of poles. The numerical results indicate a geometric convergence with respect to the degree $N$ of the rational approximation. We also note that the distribution of points at which $\Gamma(z)$ is sampled determines the region of interpolation. If this region is large enough, we observe that the extrapolation error cannot grow too large, and in particular the relative error $1-c_\infty(r)/\sqrt{2\pi}$ at infinite is well-controlled.

We have shown and discussed the errors obtained over the full complex plane. In practice, Euler's reflection formula \eqref{eqn:Ref} is applied, and so the actual error committed will be symmetric about the line $z = \frac{1}{2}+iy$, which then masks actual pole locations from the final approximation. Hence, we find that the overall best approach is to obtain a rational approximation $R(z) \approx F_r(z)$ to the scaled gamma function \eqref{eqn:Fr}, with interpolation points laid along the line of symmetry. Using the AAA algorithm, the construction is accomplished nearly instantly, and nearly full machine precision is obtained with a degree (6,6) rational function. The evaluation time is as good, and in fact slightly better than the Stirling series with truncation at $K=5$ and translation $z\to z+16$ to match the precision.

\section*{Acknowledgements}
The author is very grateful to an anonymous peer reviewer for several key suggestions that greatly improved the quality of this manuscript. The author is especially indebted for the suggestion to examine the AAA algorithm.

\bibliographystyle{amsplain}
\bibliography{Gamma_Bib}
	
\end{document}